\UseRawInputEncoding
\documentclass[a4paper,10pt,reqno]{amsart}
\usepackage[utf8]{inputenc}  
\usepackage[T1]{fontenc}     
\usepackage{textcomp}   
\usepackage{amssymb}
\usepackage{graphicx}
\usepackage[margin=1in]{geometry}  
\usepackage[dvipsnames]{xcolor}
\definecolor{Chocolat}{rgb}{0, 0.15, 0.05}
\definecolor{BleuTresFonce}{rgb}{0, 0.15, 0.05}
\definecolor{SoftTeal}{HTML}{6FAFB3}
\definecolor{MutedLavender}{HTML}{B8A9C9}
\definecolor{WarmSand}{HTML}{E6D3A3}
\definecolor{DustyRose}{HTML}{7E9F8A}
\definecolor{SageGreen}{HTML}{D8A1A9}
\definecolor{SlateBlue}{HTML}{B8445A}
\definecolor{MutedIndigo}{HTML}{847FC6}
\definecolor{DustyRed}{HTML}{5F8FC9}
\definecolor{EggShell}{RGB}{244, 241, 222}
\definecolor{Graphite}{HTML}{7A6F82}
\definecolor{DarkGraphite}{HTML}{4A4E57}
\definecolor{softblack}{HTML}{241815}
\definecolor{espresso}{HTML}{3A2A24}
\usepackage[utf8]{inputenc}
\usepackage[colorlinks,hyperindex]{hyperref}
\hypersetup{linkcolor=espresso,urlcolor=espresso,citecolor=espresso}

\setlength{\marginparwidth}{90pt}

\theoremstyle{plain}
\newtheorem{theorem}{Theorem}

\newtheorem{lemma}{Lemma}

\newtheorem*{claim*}{Claim}
\newtheorem*{thm*}{Theorem}

\theoremstyle{definition}
\newtheorem{definition}{Definition}
\newtheorem{convention}{Convention}

\newtheorem{remark}{Remark}

\newcommand{\cal}[1]{\mathcal{#1}}              


 \usepackage{multicol}

\usepackage{float}
\usepackage{amssymb}
\usepackage{stmaryrd}
\usepackage{mathalfa}
\usepackage{euscript}
\usepackage{eufrak}
\usepackage{extarrows}
\usepackage[normalem]{ulem}
\usepackage{amsmath}
\usepackage{amsthm}
 \let\oldtocsection=\tocsection
\let\oldtocsubsection=\tocsubsection

\renewcommand{\tocsection}[2]{\hspace{0em}\vspace{0.1em}\rule{0pt}{14pt}\oldtocsection{#1}{#2}\bf}
\renewcommand{\tocsubsection}[2]{\hspace{2em}\oldtocsubsection{#1}{#2}}

\numberwithin{equation}{section}

\allowdisplaybreaks
\usepackage{lscape}
\usepackage{tabularx}

\usepackage{framed}
\usepackage[title]{appendix}
\usepackage{tikz}
\usetikzlibrary{arrows.meta}
\usetikzlibrary{decorations.markings}
\usetikzlibrary{shapes.geometric,positioning}
\usepackage{booktabs}
\usepackage{euscript}
\usepackage[normalem]{ulem}
\usepackage{array}
\usepackage{collectbox}
\usetikzlibrary{arrows,matrix}

\usepackage{footnote}
\usepackage{makecell}
\pgfdeclarelayer{edgelayer}
\pgfdeclarelayer{nodelayer}
\pgfsetlayers{edgelayer,nodelayer,main}
\usepackage{array}
\usepackage{amssymb}
\usetikzlibrary{decorations.pathmorphing, patterns,shapes}
 
\usepackage{tikz-cd}
\usetikzlibrary{cd}

\usepackage{multirow}

 \usetikzlibrary{calc}
\usepackage{accents}
\definecolor{bluen}{RGB}{0,100,200}
 
\makeatletter

\pgfarrowsdeclare{open cap}{open cap}
{\pgfarrowsleftextend{+0pt}\pgfarrowsrightextend{+0.5\pgflinewidth}}
{
  \pgfmathsetlength{\pgfutil@tempdimb}{.5*\pgflinewidth-.5*\pgfinnerlinewidth}%
  \pgfsetlinewidth{\pgfutil@tempdimb}
  \pgfsetbuttcap
  \pgfsetdash{}{0pt}
  \pgfmathsetlength{\pgfutil@tempdima}{.5*\pgfutil@tempdimb+.5*\pgfinnerlinewidth}%
  \pgfpathmoveto{\pgfqpoint{0pt}{\pgfutil@tempdima}}
  \pgfpatharc{270}{90}{-\pgfutil@tempdima}
  \pgfusepathqstroke
}
\makeatother

\tikzset{snake it/.style={decorate, decoration=snake}}
\setcounter{tocdepth}{2}
\usetikzlibrary{decorations.pathreplacing}
\newcommand\mydots{\hbox to 1em{.\hss.\hss.}}
\subjclass[2020]{Primary 18F20; Secondary 18M30}

\keywords{B\' enabou-Roubaud theorem, monadic descent, string diagrams}

\thanks
{The author would like to thank François Lamarche who generously shared the idea and motivation to treat the B\' enabou-Roubaud result  in a studious and novel way. The author would also like to thank  Pierre-Louis Curien  for very useful conversations concerning this topic.  }

\title[B\' enabou-Roubaud theorem via string diagrams]{The B\' enabou-Roubaud  theorem via string diagrams}

\author{Jovana Obradovi\' c}
\address{Mathematical Institute, Serbian Academy of Sciences and Arts, Belgrade, Serbia}
\email{jovana@mi.sanu.ac.rs}

\date{\today}
 
\begin{document}

\begin{abstract}
We give  a complete proof of  the B\' enabou-Roubaud monadic descent theorem using the graphical calculus  of string diagrams.    
Our proof links  the monadic and Grothendieck's original viewpoint on descent  via an internal-category-based characterization of the
 category of descent data, equivalent to the one of Janelidze and Tholen.   This article provides a formal and self-contained account of the author’s unpublished 2016 note. 
\end{abstract}

\maketitle

\thispagestyle{empty}

\tableofcontents

\section{Introduction}
\noindent  Given a   fibration  $p:{\cal E}\rightarrow {\cal C}$ and a morphism $f:B\rightarrow A$ in the base category ${\cal C}$,   Grothendieck's   descent theory \cite{gro} studies the problem of reconstructing the fiber ${\cal E}_A$ from the fiber ${\cal E}_B$ with respect to the induced  pullback
functor $f^{*}:{\cal E}_A\rightarrow {\cal E}_B$. Here, the loss of information caused by $f^{\ast}$ is compensated by additional structure on the objects of ${\cal E}_B$, known as {\em descent data}. In this way,  one   defines the   category of descent data ${\mathsf{Desc}}_{p}(f)$, together with a comparison functor $\Phi^f_p:{\cal E}_A\rightarrow {\mathsf{Desc}}_{p}(f)$,  which  induces the  descent factorization
\begin{equation}
\raisebox{-0.8cm}{\begin{tikzpicture}[scale=1.5]
\node (A) at (-0.4,0) {${\cal E}_A$};
\node (B) [inner sep=0.3mm] at (0.75,0.5) {${\mathsf{Desc}}_{p}(f)$};
\node (C) at (1.95,0) {${\cal E}_B$};
\draw[->] (A) edge node[yshift=-0.25cm]{\small  $f^{\ast}$} (C);
\draw[->] (A) edge node[left,yshift=0.2cm,xshift=0.1cm]{\small $\Phi^f_p$} (B);
\draw[->] (B) edge node[right,yshift=0.2cm,xshift=-0.2cm]{\small  $U_f$} (C);
\end{tikzpicture} }
\label{descfact}
\end{equation}
in which $U_f$ is the forgetful functor.   One then asks whether   $\Phi^f_p$ is an equivalence of categories.  Morphisms $f$ for which this holds are called {\em effective descent morphisms}.

\smallskip

In their seminal 1970 paper \cite{br},  B\' enabou and Roubaud  observed that,  for   a bifibration $p:{\cal E}\rightarrow {\cal C}$  satisfying the Beck-Chevalley condition,  the descent factorization \eqref{descfact} coincides with the Eilenberg-Moore factorization  
\begin{equation}
\raisebox{-0.8cm}{\begin{tikzpicture}[scale=1.5]
\node (A) at (-0.4,0) {${\cal E}_A$};
\node (B) [inner sep=0.3mm] at (0.75,0.5) {${\mathsf{EM}}_{p}(T_f)$};
\node (C) at (1.95,0) {${\cal E}_B$};
\draw[->] (A) edge node[yshift=-0.25cm]{\small  $f^{\ast}$} (C);
\draw[->] (A) edge node[left,yshift=0.2cm,xshift=0.1cm]{\small $\Phi^{T_f}_p$} (B);
\draw[->] (B) edge node[right,yshift=0.2cm,xshift=-0.2cm]{\small  $U_{T_f}$} (C);
\end{tikzpicture} }
\label{emfact}
\end{equation}
associated to the adjunction $f^{\ast}\dashv f_!:{\cal E}_B\rightarrow {\cal E}_A$,
 up to an equivalence of categories ${\mathsf{EM}}_{p}(f)\simeq {\mathsf{Desc}}_{p}(f)$. As a consequence, they obtained a characterization of effective descent morphisms by means of monadicity:  $f:B\rightarrow A$ is an effective descent morphism precisely when the functor $f^{*}:{\cal E}_A\rightarrow {\cal E}_B$
is monadic, i.e., when the canonical comparison  functor $\Phi^{T_f}_p$ in \eqref{emfact} is an equivalence of categories.  However, their article did not attempt to provide a proof  of the equivalence ${\mathsf{Desc}}_{p}(f)\simeq {\mathsf{EM}}_{p}(f)$.

\smallskip

In   \cite{th},  Janelidze and Tholen  introduced a third   description  of the category of descent data,   identifiying it with the category $\mathsf{Act}_p^{{\mathsf{Eq}}(f)}$ of actions of the internal category ${\mathsf{Eq}}(f)$, induced by the kernel pair of $f$, on $p$, while treating the B\' enabou-Roubaud theorem as folklore. 

\smallskip

Over the years, a number of other authors have also returned to the topic, but a complete written proof of  the B\' enabou-Roubaud theorem has remained elusive.  In   more modern developments, such as the work \cite{FLN} of  Lucatelli Nunes, the theorem  was established only as a  consequence of    more general results  appealing to higher-level machinery. Only very recently, in his 2024 article \cite{BK},   Kahn filled this gap by presenting a detailed, rigorous proof of the  B\' enabou-Roubaud theorem in the original 1-categorical context. Nevertheless,  in order to simplify the calculations, he  rectified the pseudofunctor $f\mapsto f^{\ast}$ to an honest functor, which  eliminated the need to manage pseudofunctorial coherence isomorphisms. Even with this ``bypass'', the proof contains long series of explicit calculations.
 
\smallskip

The aim of this paper is to supply a  different kind of  proof of the  B\' enabou-Roubaud theorem: a graphical proof  based on the string diagram  calculus built on the {\em    bifibrational signature}, that is, the 2-polygraph (in the sense of Burroni, see \cite{bur}) with relations encoding the   data of  pseudofunctoriality of $f\mapsto f^{\ast}$ and
the data of the adjunction $f^{\ast}\dashv f_!:{\cal E}_B\rightarrow {\cal E}_A$, including the (invertible) Beck-Chevalley transformation. By the soundness and completeness of the graphical calculus of string diagrams for 2-categories, established by Joyal and Street in \cite{JoyalStreet1991}, the 2-cells of the  2-category ${\mathsf{BiFib}}_{\cal C}$ built on the same signature may be identified with isotopy classes of those string diagrams,  and their equality 
 is established by finite sequences of diagrammatic rewritings. The objects of the three descent categories $${\mathsf{EM}}_{p}(f), \quad {\mathsf{Desc}}_{p}(f) \quad \mbox{ and } \quad \mathsf{Act}_p^{{\mathsf{Eq}}(f)}$$ can then be expressed  by extending the string diagram  calculus of ${\mathsf{BiFib}}_{\cal C}$  in three ways, each time with a single new generating 2-cell, encoding a particular way of characterizing a descent datum, subject to  appropriate relations, and their equivalence is established in the same diagrammatic fashion, which  provides conceptual clarity of the result, making it
 effectively a matter of geometry rather than bookkeeping.

\smallskip

Although Kahn's work provides a complete algebraic proof of the B\' enabou-Roubaud theorem, we believe that the string-diagrammatic proof developed here is of independent value. First, it  takes into account the pseudofunctorial structure induced by a bifibration explicitly: all the pseudofunctorial coherence isomorphisms are treated as generating 2-cells and their coherence is proven internally in ${\mathsf{BiFib}}_{\cal C}$. Second, the diagrammatic formulation yields a proof  that is arguably simpler and more transparent: long chains  of componentwise algebraic calculations become, without loss of rigor, short sequences of local diagram rewritings, since the issues of associativity, functoriality and naturality are ``absorbed by the syntax''. Third, we provide an alternative description of the category $\mathsf{Act}_p^{{\mathsf{Eq}}(f)}$, and we explicitly link the original and monadic description of descent data with  this new characterization of descent data. And, finally, we hope that our proof can be useful in tracing descent in other contexts, such as categorical semantics of dependent type theory, in which  bifibrations satisfying Beck-Chevalley condition play a prominent role (cf. \cite{BJ} for a general reference on the subject, and \cite{pl0} for a more specialized analysis of the pseudofunctorial character of  pullback, which in fact uses string diagrams in order to prove certain properties of interpretations of dependent types).

\smallskip

 The article is split into three parts. Section \ref{prel}  is a recollection on bifibrations and string diagrams. Section \ref{syncat} introduces the string-diagrammatic category ${\mathsf{BiFib}}_{\cal C}$ and proves pseudofunctorial coherence  as a syntactic result, using simple a simple rewriting technique. Section \ref{thethm}  is  devoted to the B\' enabou-Roubaud theorem: we introduce  the three categories of descent data  and we provide a graphical proof of their equivalence.  

\smallskip

\paragraph{\bf Conventions.}   This paper  involves compositions of various kinds. In the 2-category ${\mathsf{Cat}}$ of categories, functors and natural transformations, we omit the symbol ``$\circ$'' when denoting the composition of functors; the symbol ``$\circ$'' is used explicitly for vertical composition of natural transformations and for composition of ``ordinary'' arrows inside a particular category. Horizontal composition of natural transformations (including whiskering)  will be denoted by ``$\cdot$''. We shall take the same convention for the syntactic  2-category  ${\mathsf{BiFib}}_{\cal C}$  and its extensions.  

\section{Preliminaries}\label{prel}
\noindent To make this article self-contained, in this section
  we review two basic concepts of the theory of 2-dimensional categories: {\it bifibrations} and their  pseudofunctorial structure, and {\em string diagrams}. The book \cite{NY} serves as our general reference  to 2-categories and bicategories,  containing in particular   more detailed treatment of both of these subjects. As more specialized  references, we recomend \cite{LR} for fibrations, and   \cite{dm} and \cite{D} for string diagrams. 
\subsection{The pseudofunctorial structure of a bifibration} 
A functor $p:{\cal E}\rightarrow {\cal C}$ is called a {\em Grothendieck fibration} or a {\em fibration}  if, for each morphism $f:B\rightarrow A$   in ${\cal C}$ and each object  $X\in {\it Ob}({\cal E}_A)$, where ${\cal E}_A$ denotes the fiber of $p$ over $A$, there exists a morphism  $\hat{f}:Y\rightarrow X$  in ${\cal E}$ such that:
\begin{itemize}
\item  $p(\hat{f})=f$, and 
\item $\hat{f}$ is {\em cartesian}: for every object $Z\in {\it Ob}({\cal E})$,  every arrow $h:Z\rightarrow X$ in ${\cal E}$ and every arrow  $g:p(Z)\rightarrow B$ in ${\cal C}$ such that $f\circ g=p(h)$, there exists a unique arrow $k:Z\rightarrow Y$ in ${\cal E}$ such that $\hat{f}\circ k=h$.
\end{itemize} The  morphism $\hat{f}:Y\rightarrow X$ is called a  {\em cartesian lift of $f$}.  Dually, a functor $p:{\cal E}\rightarrow {\cal C}$  is  an {\em opfibration} if the opposite functor $p^{\it op}:{\cal E}^{\it op}\rightarrow {\cal C}^{\it op}$ is a fibration. A  {\em bifibration} is a functor that is both a fibration and an opfibration.

\smallskip

If $p:{\cal E}\rightarrow {\cal C}$ is  a fibration, then, assuming the  axiom of choice,  one can pick a {\em cleavage over} ${\cal C}$ with respect to $p$, i.e., one can fix, for each  morphism $f:B\rightarrow A$   in ${\cal C}$ and each object $X\in {\it Ob}({\cal E}_A)$, a cartesian lift $\hat{f}_X: f^{\ast}X \rightarrow X$ of $f$ at $X$. By the universal property of cartesian lifts,  this  choice  extends functorially: each object $X\in {\it Ob}({\cal E}_A)$  is mapped to the domain $f^{\ast}X$ of $\hat{f}_X$, and each morphism $\gamma:X'\rightarrow X$ is mapped to the unique arrow $f^{\ast}\gamma:f^{\ast}X\rightarrow f^{\ast}X'$ making the diagram 
\begin{center}
\begin{tikzpicture}[scale=1.5]
\node (A) at (0,1) {$f^{\ast}X$};
\node (B) at (1,1) {$X$};
\node (C) at (0,0) {$f^{\ast}X'$};
\node (D) at (1,0) {$X'$};
\draw[->,dashed] (A) edge node[left]{$f^{\ast}\gamma$} (C);
\draw[->] (B) edge node[right]{$\gamma$} (D);
\draw[->] (A) -- (B);
\draw[->] (C)--(D);
\end{tikzpicture}
\end{center}
commute. The resulting functor $f^{\ast}:{\cal E}_A\rightarrow {\cal E}_B$ is called the pullback functor (a.k.a. reindexing or base-change functor).
The assignment  
$$\begin{array}{rcl}
A &\longmapsto & {\cal E}_A\\
f:B\rightarrow A &\longmapsto &f^{\ast}:{\cal E}_A\rightarrow {\cal E}_B
\end{array}$$
defines a  contravariant   pseudofunctor $(-)^{\ast} :{\cal C}\rightarrow \mathsf{Cat}$, since   identities and composition are preserved only up to canonical isomorphisms.  In fact,   fibrations (with a chosen cleavage) correspond precisely to such pseudofunctors, in a strict 2-equivalence of the corresponding 2-categories (see \cite[Theorem 10.6.16]{NY}). For an object $B\in{\cal C}$ and composable arrows $g:C\rightarrow B$ and $f:B\rightarrow A$  in ${\cal C}$,  the natural isomorphisms 
\begin{equation}\iota_B: {\text{Id}}_{{\cal E}_B}\Rightarrow ({\text{id}}_B)^{\ast}\quad\mbox{ and }\quad  \kappa_{f,g}:g^{\ast} f^{\ast}\Rightarrow (f\circ g)^{\ast}\label{canonical_isos_pseudo}\end{equation} witnessing the pseudofunctoriality of $(-)^{\ast}$ arise by the universal property of cartesian lifts as follows:  for $X\in{\it Ob}({\cal E}_{B})$, the component $\iota_{B,X}$ of  $\iota_B$  arises from the  unique factorization $$X\xrightarrow{\iota_{B,X}}(\mathrm{id}_B)^{\ast}X\xrightarrow{(\widehat{\mathrm{id}_B})_X} X$$ of ${\mathrm{id}}_X$,
and, for $Y\in{\it Ob}({\cal E}_A)$, the component $\kappa_{f,g,Y}$ of $\kappa_{f,g}$ arises from the unique factorization $$g^{\ast}f^{\ast}Y\xrightarrow{\kappa_{f,g,Y}}(f\circ g)^{\ast}Y\xrightarrow{(\widehat{f\circ g})_Y} Y$$
of the composite $\hat{f}_Y\circ \hat{g}_{f^{\ast}Y}$. The proof that $\iota_B$ and $\kappa_{f,g}$ indeed satisfy the associativity and unit axioms of pseudofunctors is given in  \cite[Lemma 10.4.7.]{NY}. In turn,  the coherence theorem for pseudofunctors (cf. \cite[Theorem 3.2.7 ]{ML}) guarantees that any two composites of canonical isomorphisms of the form \eqref{canonical_isos_pseudo} (and their inverses) with the same source and target are equal.

\smallskip

If a fibration $p:{\cal E}\rightarrow {\cal C}$ is also an opfibration, we can, by dual reasoning, fix a {\em cocleavage  over} ${\cal C}$ with respect to $p$  by choosing for each  morphism $f:B\rightarrow A$   in ${\cal C}$ and $Y\in {\it Ob}({\cal E}_B)$ a {\em cocartesian lift} $\tilde{f}_Y: Y\rightarrow f_{!}Y$ of $f$ at $Y$. This choice induces in the obvious way a direct image functor $f_!:{\cal E}_B\rightarrow {\cal E}_A$.

\smallskip
In what follows, we shall always assume that a bifibration is {\em cloven}, i.e., that it comes equipped with a chosen cleavage and cocleavage. 

\smallskip
 
Let  $p:{\cal E}\rightarrow {\cal C}$ be a bifibration. For an arrow $f:B\rightarrow A$ of ${\cal C}$ and $X\in {\it Ob}({\cal E}_B)$, the
cartesian lift $\hat{f}_{f_!X}:f^*f_!X\to f_!X$ determines a unique
factorization
\[
X \xrightarrow{\eta_{f,X}} f^*f_!X \xrightarrow{\hat{f}_{f_!X}} f_!X,
\]
of $\tilde{f}_X$ and the components $\eta_{f,X}$ assemble into a natural transformation
$\eta_f:{\text{Id}}_{{\cal E}_B}\Rightarrow f^{\ast}f_{!} $.
Dually,  for  $Y\in{\it Ob}({\cal E}_A)$,  the cocartesian lift
$\tilde{f}_{f^*Y}:f^{\ast}Y\to f_!f^*Y$ determines a unique factorization
\[
f^{\ast}Y\xrightarrow{\tilde{f}_{f^*Y}}f_!f^*Y \xrightarrow{\varepsilon_{f,Y}} Y,
\] of $\hat{f}_Y$,
yielding a natural transformation $\varepsilon_f:f_{!}f^{\ast}\Rightarrow {\text{Id}}_{{\cal E}_A}$.
The natural transformations $\eta_f$ and $\varepsilon_f$ satisfy the triangle identities by universal property of 
 cartesian and cocartesian arrows, so the pullback and the direct image functor form an adjunction $f^{\ast}\dashv f_{!}:{{\cal E}_B}\rightarrow {{\cal E}_A}$ with unit $\eta_f$ and counit $\varepsilon_f$.

\smallskip

A bifibration $p:{\cal E}\rightarrow{\cal C}$ satisfies the {\em Beck-Chevalley property} if, for any pullback square 
$$P=
\raisebox{-0.85cm}{\begin{tikzpicture}
\node(a) at (0,0) {$C$};\node(b) at (1,0) {$D$};\node(c) at (0,1) {$A$};\node(d) at (1,0.975) {$B$};
\draw[->](a)--(b);\draw[->] (c)--(0.8,1) ;\draw[->](1,0.8)--(b);\draw[->](c)--(a);
\node(a') at (0.5,1.2) {\small $a$};
\node(b') at (0.5,-0.2) {\small $b$};
\node(c') at (1.2,0.5) {\small $d$};
\node(d') at (-0.2,0.5) {\small $c$};
\end{tikzpicture}}$$
in ${\cal C}$, the  {\em Beck-Chevalley transformation} ${\mathsf{BC}}_P:c_!  a^{\ast}\rightarrow b^{\ast}  d_!$, defined by $${\mathsf{BC}}_P:=(\varepsilon_c \cdot b^{\ast}  d_!)\circ(c_!\cdot (\kappa^{-1}_{b,c}\circ\kappa_{d,a})\cdot d_!)\circ (c_!  a^{\ast}\cdot \eta_d),$$ is an isomorphism.

\begin{remark}[Fibration-first]
The condition "all pullback functors have left adjoints" is precisely what promotes a fibration to a bifibration (cf. \cite[Lemma 2.2.8.]{LR}). In Section \ref{syncat}, when encoding the pseudofunctorial structure of a  bifibration    by a string-diagrammatic 2-category,  we shall adopt this {\em fibration-first} perspective: the reindexing functors $f^{\ast}:{\cal E}_A\rightarrow {\cal E}_B$ will be taken as primitive. The direct image functors $f_!$ will exist only as left adjoints to $f^{\ast}$, and their pseudofunctorial structure, including Beck-Chevalley transformations, will be obtained via mate calculus from the pullback side.  This keeps the structure minimal  by avoiding redundant assumptions on $f_!$. \hfill$\square$
\end{remark}

\subsection{String diagrams in category theory}
\noindent
String diagrams provide a graphical representation of morphisms in various kinds of categories  -- and more generally, of 2-cells in 2-categories or bicategories. They were introduced independently by Hotz \cite{Ho} and Joyal and Street \cite{JS88}, \cite{JoyalStreet1991}, with the goal of replacing long algebraic expressions by intuitive geometric objects. In a string diagram, structural equalities of morphisms correspond to simple topological deformations of the diagram, making otherwise opaque algebraic identities visually clear.

\smallskip

As mathematical objects, string diagrams are finite topological graphs embedded into a square, with free
 endpoints  of edges (or strings) bound to the top or bottom side of the square, together with
a   labelling specified by an appropriate signature $\Sigma$, considered up to boundary-- and label--preserving planar isotopy. In addition, one requires that the projection of any edge on a vertical side of the square is injective. One can then interpret any string diagram as a morphism (or a 2-cell) in the free structure generated by $\Sigma$, and vice versa, and  the soundness and completeness theorem  \cite[Theorem 3]{JS88} allows us to completely  switch to string diagrams when working with those morphisms (or 2-cells). In other words, the appropriate free structure can be   characterized  in terms of string diagrams and their  vertical and horizontal pasting.

\smallskip 

In the main part of this article, we shall use string diagrams as a graphical representation of 2-cells in 2-categories (concretely, of natural transformations in certain sub-2-categories of ${\mathsf{Cat}}$). In this setting, string diagrams are Poincar\' e dual to the standard formalism of pasting diagrams.  This duality is illustrated in Figure \ref{dualnost}.

\begin{center}
\begin{figure}[H]
\begin{tikzpicture}[scale=1.15]
\node(a) [circle, fill=black, draw=black, inner sep=0.3mm] at (-1.2,0) {};
\node(b) [circle, fill=black, draw=black,inner sep=0.3mm] at (1.2,0) {};
\node(A)  at (-1.4,0) {\small $A$};
\node(B)  at (1.4,0) {\small $B$};
\node(C)  at (-0.62,0.8) {\small $C$};
\node(D)  at (0.62,0.8) {\small $D$};
\node(E)  at (0,-0.9) {\small $E$};
\node(a1) [circle, fill=black, draw=black,inner sep=0.3mm] at (-0.6,0.6) {};
\node(a2) [circle, fill=black, draw=black,inner sep=0.3mm] at (0.6,0.6) {};
\node(b1) [circle, fill=black, draw=black,inner sep=0.3mm] at (0,-0.675) {};
\node(f1) [draw=none] at (-1.2,0.5) {\small $f_1$};\node(f2) [draw=none] at (0,0.9) {\small $f_2$};\node(fm) [draw=none] at (1.25,0.5) {\small $f_3$};
\node(g1) [draw=none] at (-0.9,-0.7) {\small $g_1$};\node(gn) [draw=none] at (0.9,-0.7) {\small $g_2$};
\node(al) [draw=none] at (0.2,0) {$\alpha$};
\draw[thick,->] (-1.2,0.1) to[out=70,in=200] (-0.7,0.6); 
\draw[thick,->] (-1.2,-0.1) to[out=-70,in=180] (-0.1,-0.675); 
\draw[thick,->] (0.7,0.6) to[out=-20,in=120] (1.2,0.1); 
\draw[thick,->] (0.1,-0.675) to[out=0,in=-120] (1.2,-0.1); 
\draw[thick,->] (-0.5,0.65) to[out=10,in=170] (0.5,0.65); 
\draw[-Implies,line width=0.7pt,double distance=1.5pt](0,0.4)--(0,-0.4);
\end{tikzpicture} \quad \raisebox{1cm}{$\leadsto$} \quad \raisebox{-0.2cm}{\begin{tikzpicture}
\draw[draw=none,fill=EggShell!60](-1,2)--(1,2)--(1,0)--(-1,0)--cycle;
\node(A)  at (-0.65,1) {\small $A$};
\node(B)  at (0.65,1) {\small $B$};
\node(C)  at (-0.3,1.65) {\small $C$};
\node(D)  at (0.3,1.65) {\small $D$};
\node(E)  at (0,0.35) {\small $E$};
\node(al) [circle,fill=white,draw=black,inner sep=0.4mm] at (0,1) {\small $\alpha$};
\draw[thick] (-0.8,2) to[out=-90,in=135] (al);\draw[thick] (0.8,2) to[out=-90,in=45] (al);\draw[thick] (0,2) to[out=-90,in=90] (al);
\draw[thick] (-0.5,0) to[out=90,in=-120] (al);\draw[thick] (0.5,0) to[out=90,in=-60] (al);
\node(f1) [draw=none] at (-0.8,2.2) {\small $f_1$};\node(f2) [draw=none] at (0,2.2) {\small $f_2$};\node(fm) [draw=none] at (0.8,2.2) {\small $f_3$};
\node(g1) [draw=none] at (-0.5,-0.2) {\small $g_1$};\node(gn) [draw=none] at (0.5,-0.2) {\small $g_2$};
\end{tikzpicture}}
\caption{An example of Poincar\' e duality between pasting diagrams   and string diagrams: 0-cells $A$, $B$, $C$, $D$ and $E$, and the 2-cell $\alpha:f_3 f_2  f_1\Rightarrow g_2  g_1$, swap their dimension. In the string-diagram representation, $\alpha$  is given by a vertex in the plane whose input strings
 correspond  to factors of its domain, and whose output strings 
correspond to the factors of its codomain; in particular, a diagram is  read   top-down. The 0-cells are identified with the 2-dimensional regions delimited by the strings.}
\label{dualnost}
\end{figure}
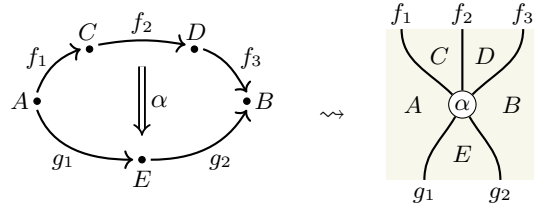
\end{center}
\begin{convention}
In what follows, instead of labeling explicitly the 2-dimensional regions of a string diagram by categories,   we shall use different colours  to denote different categories. The 
identification of a colour with a particular category will be given  only in Section 4, for the proof of the B\' enabou-Roubaud theorem.
\end{convention}
Vertical  composition of 2-cells is given by vertical {\em top-to-bottom} pasting of string diagrams, and horizontal composition of 2-cells is given by horizontal {\em left-to-right} pasting of string diagrams. The interchange law corresponds  to the fact that diagrams of the form
\begin{equation}
\raisebox{-1.2cm}{\begin{tikzpicture}
\draw[draw=none,fill=EggShell!60](-1,2)--(1,2)--(1,0)--(-1,0)--cycle;
\draw[draw=none,fill=DustyRed!40](-0.5,0)--(0.5,0)--(0.5,2)--(-0.5,2)--cycle;
\draw[draw=none,fill=SlateBlue!40] (0.5,0)--(0.5,2)--(1,2)--(1,0)--cycle;
\node(a) [circle,fill=white,draw=black,inner sep=0.45mm] at (-0.5,1.4) {\small $\alpha$};
\node(b) [circle,fill=white,draw=black,inner sep=0.2mm] at (-0.5,0.6) {\footnotesize $\beta$};
\node(c) [circle,fill=white,draw=black,inner sep=0.35mm] at (0.5,1.4) {\small $\gamma$};
\node(d) [circle,fill=white,draw=black,inner sep=0.4mm] at (0.5,0.6) {\small $\delta$};
\draw[thick] (-0.5,2)--(a)--(b)--(-0.5,0);\draw[thick] (0.5,2)--(c)--(d)--(0.5,0);
\node(f1) [draw=none] at (-0.5,2.2) {\small $f_1$};\node(h1) [draw=none] at (-0.5,-0.2) {\small $h_1$};
\node(f2) [draw=none] at (0.5,2.2) {\small $f_2$};\node(h1) [draw=none] at (0.5,-0.2) {\small $h_2$};
\node(g1) [draw=none] at (-0.7,1) {\small $g_1$};\node(g2) [draw=none] at (0.75,1) {\small $g_2$};
\end{tikzpicture}}\label{interchange}
\end{equation}
are well-defined. 

\smallskip

 One of the strengths of the string diagrammatic calculus is that it allows us to deal with the identities implicitly: identity 1-cells appear as ``invisible'' strings (i.e., as ``unbroken" regions), and identity 2-cells appear as ``invisible'' nodes (i.e., as bare strings). For example, the identity 1-cell ${\text{id}}_A$ and the identity 2-cell ${\text{Id}}_f$, for $f:A\rightarrow B$, are presented as
\begin{center}
\raisebox{0.5cm}{\begin{tikzpicture}
\draw[draw=none,fill=EggShell!60](0,0)--(1,0)--(1,1)--(0,1)--cycle;
\end{tikzpicture}} \quad\quad\raisebox{0.85cm}{\mbox{and}}\quad\quad \begin{tikzpicture}
\draw[draw=none,fill=EggShell!60](0,0)--(1,0)--(1,1)--(0,1)--cycle;
\draw[draw=none,fill=DustyRed!40](0.5,0)--(1,0)--(1,1)--(0.5,1)--cycle;
\draw[thick] (0.5,1)--(0.5,0);
\node at (0.5,-0.2) {\small $f$};\node at (0.5,1.2) {\small $f$};
\end{tikzpicture}
\end{center}
respectively. By substituting in \eqref{interchange} $\beta$ and $\gamma$ by the appropriate identity 2-cells,  we get the {\em sliding equalities}
\begin{equation}
\raisebox{-1.2cm}{\begin{tikzpicture}
\draw[draw=none,fill=EggShell!60](-1,2)--(1,2)--(1,0)--(-1,0)--cycle;
\draw[draw=none,fill=DustyRed!40](-0.5,0)--(0.5,0)--(0.5,2)--(-0.5,2)--cycle;
\draw[draw=none,fill=SlateBlue!40] (0.5,0)--(0.5,2)--(1,2)--(1,0)--cycle;
\node(a) [circle,fill=white,draw=black,inner sep=0.45mm] at (-0.5,1.4) {\small $\alpha$};
\node(d) [circle,fill=white,draw=black,inner sep=0.4mm] at (0.5,0.6) {\small $\delta$};
\draw[thick] (-0.5,2)--(a)--(-0.5,0);\draw[thick] (0.5,2)--(d)--(0.5,0);
\node(f1) [draw=none] at (-0.5,2.2) {\small $f_1$};\node(h1) [draw=none] at (-0.5,-0.2) {\small $g_1$};
\node(f2) [draw=none] at (0.5,2.2) {\small $g_2$};\node(h1) [draw=none] at (0.5,-0.2) {\small $h_2$};
\end{tikzpicture} \quad\raisebox{1.3cm}{$=$}\quad \begin{tikzpicture}
\draw[draw=none,fill=EggShell!60](-1,2)--(1,2)--(1,0)--(-1,0)--cycle;
\draw[draw=none,fill=DustyRed!40](-0.5,0)--(0.5,0)--(0.5,2)--(-0.5,2)--cycle;
\draw[draw=none,fill=SlateBlue!40] (0.5,0)--(0.5,2)--(1,2)--(1,0)--cycle;
\node(a) [circle,fill=white,draw=black,inner sep=0.45mm] at (-0.5,1) {\small $\alpha$};
\node(d) [circle,fill=white,draw=black,inner sep=0.4mm] at (0.5,1) {\small $\delta$};
\draw[thick] (-0.5,2)--(a)--(-0.5,0);\draw[thick] (0.5,2)--(d)--(0.5,0);
\node(f1) [draw=none] at (-0.5,2.2) {\small $f_1$};\node(h1) [draw=none] at (-0.5,-0.2) {\small $g_1$};
\node(f2) [draw=none] at (0.5,2.2) {\small $g_2$};\node(h1) [draw=none] at (0.5,-0.2) {\small $h_2$};
\end{tikzpicture}\quad\raisebox{1.3cm}{$=$}\quad \begin{tikzpicture}
\draw[draw=none,fill=EggShell!60](-1,2)--(1,2)--(1,0)--(-1,0)--cycle;
\draw[draw=none,fill=DustyRed!40](-0.5,0)--(0.5,0)--(0.5,2)--(-0.5,2)--cycle;
\draw[draw=none,fill=SlateBlue!40] (0.5,0)--(0.5,2)--(1,2)--(1,0)--cycle;
\node(a) [circle,fill=white,draw=black,inner sep=0.45mm] at (-0.5,0.6) {\small $\alpha$};
\node(d) [circle,fill=white,draw=black,inner sep=0.4mm] at (0.5,1.4) {\small $\delta$};
\draw[thick] (-0.5,2)--(a)--(-0.5,0);\draw[thick] (0.5,2)--(d)--(0.5,0);
\node(f1) [draw=none] at (-0.5,2.2) {\small $f_1$};\node(h1) [draw=none] at (-0.5,-0.2) {\small $g_1$};
\node(f2) [draw=none] at (0.5,2.2) {\small $g_2$};\node(h1) [draw=none] at (0.5,-0.2) {\small $h_2$};
\end{tikzpicture}}\label{sliding}
\end{equation}
which show how horizontal composition can be treated indifferently as the vertical composition  $(g_1\cdot \delta)\circ(\alpha\cdot g_2)$ or the vertical composition $(\alpha\cdot h_2)\circ (f_1\cdot \delta)$.

\smallskip

When proving B\' enabou-Roubaud theorem,  we will need to reason not only about categories, functors and natural transformations, but, at the same time,  about 
``ordinary'' objects and morphisms in those categories. This is accomplish by the standard technique of  identifying the objects of a category ${\cal C}$ with functors   $1\rightarrow {\cal C}$, where $1$ is the terminal category.     A morphism $f:X\rightarrow Y$ is then a natural transformation between
 two such  functors, and hence it is represented by a vertex in the string-diagrammatic calculus. With this convention, by replacing $\alpha$ by $f$ in the sliding equalities \eqref{sliding}, we see how naturality equations are ``handled by the syntax'' in the formalism of string diagrams.

\section{The string-diagrammatic 2-category $\mathsf{BiFib}_{\cal C}$}\label{syncat}
\noindent In this section, for a given category ${\cal C}$, we introduce a string-diagrammatic 2-category $\mathsf{BiFib}_{\cal C}$ that syntactically encodes the data of pseudofunctoriality of $f\mapsto f^{\ast}$ and the data of the adjunction $f^{\ast}\dashv f_!:{\cal E}_B\rightarrow {\cal E}_A$, for all morphisms $f:B\rightarrow A$ in ${\cal C}$, together with the (invertible) Beck-Chevalley transformations defined for pullbacks in ${\cal C}$. We proceed in two stages. First we define the 2-category  $\mathsf{Fib}_{\cal C}$ that captures only  the pseudofunctoriality of $(-)^*:  \mathcal C^{\mathrm{op}}\rightarrow \mathsf{Cat}
$. We then extend  $\mathsf{Fib}_{\cal C}$ to the full bifibrational 2-category $\mathsf{BiFib}_{\cal C}$.

\subsection{The  2-category $\mathsf{Fib}_{\cal C}$ encoding pseudofunctoriality}
 
\noindent   A {\em path} in a category ${\cal C}$ is a sequence 
$${\bf f} := B_n \xrightarrow{f_n} B_{n-1} \xrightarrow{f_{n\!-\!1}} \cdots \xrightarrow{f_1} B_0, \quad n\geq 1,$$
of composable arrows in ${\cal C}$. The object $B_n$ (resp. $B_0$) is called the source (resp. the target) of ${\bf f}$.  If two paths ${\bf f}$ and ${\bf g}$ have the same source and target, we say that they are parallel. The {\em value of a path} ${\bf f}$, denoted by $[f]$, is the morphism in ${\cal C}$ obtained by composing the arrows of ${\bf f}$.
We extend the notion of  a path by allowing, for each object $B\in {\it Ob}({\cal C})$,   the {\em empty path on} $B$, which we denote by $\emptyset_B$; the source and the target of $\emptyset_B$ is $B$. When a path ${\bf f}$ in ${\cal C}$ consists of a single arrow   $f$,  we shall denote it simply by $f$. For  paths ${\bf f}:A\rightarrow B$ and ${\bf g}:B\rightarrow C$, we shall write ${\bf f}\cdot {\bf g}:A\rightarrow C$ for the path obtained by concatenating $f$ and $g$ in the obvious way; concatenation of paths is associative on the nose and has empty paths as neutral elements.

\smallskip

 A {\em (proper) polygon type} in ${\cal C}$ is a pair $({\bf f},{\bf g})$ of   parallel  paths 
$$
{\bf f} := B_n \xrightarrow{f_n} B_{n-1} \xrightarrow{f_{n\!-\!1}} \cdots \xrightarrow{f_1} B_0
\quad\mbox{ and }\quad
{\bf g}: = A_m \xrightarrow{g_m} A_{m-1} \xrightarrow{g_{m\!-\!1}} \cdots \xrightarrow{g_1} A_0, 
$$
  $n,m\ge 1$, in ${\cal C}$, such that $[{\bf f}]=[{\bf g}]$.  The object $B_n$ (resp.  $B_0$) is called the source  (resp. the target) of  $({\bf f},{\bf g})$. Polygon types record the outer boundaries of commutative polygons in ${\cal C}$ (that is,
of commutative diagrams in ${\cal C}$ embedded in the plane for which the designation of “top”
and “bottom” is given to the two outer parallel paths).  In particular, while there can be many different commutative polygons witnessing the equality $[{\bf f}]=[{\bf g}]$, they all share the same polygon type $({\bf f},{\bf g})$. We extend the notion of   polygon type by allowing, for each object $B\in {\it Ob}({\cal C})$,   polygon types of the form $({\bf f},\emptyset_B)$ and, symmetrically, of the form $(\emptyset_B,{\bf f})$, where ${\bf f}:B\rightarrow B$ is any path in ${\cal C}$ such that $[{\bf f}]={\mathrm{id}}_B$; we refer to such polygon types as  {\em degenerate polygon types}. In addition, we also consider $(\emptyset_B,\emptyset_B)$ as a degenerate polygon type. 
The {\em opposite} polygon type of a (possibly degenerate) polygon type $({\bf f}, {\bf g})$ is the polygon type
$({\bf f}, {\bf g})^{\it op} := ({\bf g},{\bf f})$.  Polygon types in ${\cal C}$ naturally admit the operations of vertical   and horizontal pasting:
\begin{itemize}
\item  vertical pasting of polygon types $({\bf f},{\bf g})$ and $({\bf g},{\bf h})$ with matching intermediate boundary   ${\bf g}$ (which might be empty) is defined by $({\bf g},{\bf h})\circ ({\bf f},{\bf g}):=({\bf f},{\bf h})$;
\item   horizontal pasting of polygon types $({\bf f}, {\bf g})$ and $({\bf h}, {\bf k})$, such that the target object of $({\bf f}, {\bf g})$ matches the source object of $({\bf h}, {\bf k})$, is defined by $({\bf f}, {\bf g})\cdot ({\bf h}, {\bf k}):=({\bf f}\cdot {\bf h},{\bf g}\cdot {\bf k})$.
\end{itemize}
Both of these operations are clearly strictly associative and unital.

\begin{definition}[The pseudofunctorial  signature $\Sigma^{\ast}_{{\cal C}}$]\label{def:chi-signature}
Let ${\cal C}$ be a category. The {\em  pseudofunctorial  signature} $\Sigma^{\ast}_{{\cal C}}$  is the  2-polygraph defined as follows:

\begin{itemize}

\item  {0-cells: $(\Sigma^{\ast}_{{\cal C}})_0:=\{{\cal E}_A\,|\, A\in{\it Ob}({\cal C})\}$}; 
 \item  {1-cells:   $(\Sigma^{\ast}_{{\cal C}})_1:=\{f^{\ast}:{\cal E}_B\rightarrow {\cal E}_A\,|\, f:A\rightarrow B \in {\it Arr}({\cal C})\}$}; 
\item 2-cells: For each  proper polygon type $({\bf f}, {\bf g})$ in ${\cal C}$,
where
${\bf f} = X_n \xrightarrow{f_n} X_{n-1} \xrightarrow{f_{n\!-\!1}} \cdots \xrightarrow{f_1} X_0$, and
$
{\bf g} = Y_m \xrightarrow{g_m} Y_{m-1} \xrightarrow{g_{m\!-\!1}} \cdots \xrightarrow{g_1} Y_0
$,  we
adjoin a 2-cell
\[
\chi_{({\bf f}, {\bf g})}:
\ f_n^{*} \cdots   f_1^{*}\;\Rightarrow\; g_m^{*}  \cdots   g_1^{*};
\]
for degenerate polygon types $({\bf f},\emptyset_B)$, $(\emptyset_B,{\bf f})$ and $(\emptyset_B,\emptyset_B)$, where $B\in {\cal C}$ and ${\bf f} = B \xrightarrow{f_n} X_{n-1} \xrightarrow{f_{n\!-\!1}} \cdots \xrightarrow{f_1} B$, we adjoin
$$
\chi_{({\bf f},\emptyset_B)}:  f_n^{*} \cdots   f_1^{*} \Rightarrow \epsilon_{{\cal E}_B},\quad
\chi_{(\emptyset_B,{\bf f})}:\epsilon_{{\cal E}_B}\Rightarrow   f_n^{*} \cdots   f_1^{*} \quad \mbox{ and } \quad \chi_{(\emptyset_B,\emptyset_B)}:\epsilon_{{\cal E}_B}\Rightarrow \epsilon_{{\cal E}_B},
$$ 
respectively, where $\epsilon_{{\cal E}_B}$ denotes the empty path of 1-cells at ${{\cal E}_B}$. By writing ${\bf f}^{\ast}$ instead of  $f_n^{*} \cdots   f_1^{*}$ (resp. instead of $\epsilon_{{\cal E}_B}$) if  ${\bf f} =X_n \xrightarrow{f_n} X_{n-1} \xrightarrow{f_{n\!-\!1}} \cdots \xrightarrow{f_1} X_0$ (resp.  if ${\bf f}=\emptyset_B$), we therefore have:  
$$ 
(\Sigma^{\ast}_{{\cal C}})_2 := \{\chi_{({\bf f},{\bf g})}:{\bf f}^{\ast}\Rightarrow {\bf g}^{\ast} \,|\, ({\bf f}, {\bf g}) \mbox{ is a  polygon type in } {\cal C}\}.
 $$
\end{itemize}\hfill $\square$
 \end{definition}
 \begin{definition}[The string-diagrammatic 2-category $\mathsf{Fib}_{\cal C}$]\label{fff}
We define the string-diagrammatic 2-category $\mathsf{Fib}_{\cal C}$ as the quotient
\[
 \mathsf{Fib}_{\cal C}
\ :=\
\mathcal{F}(\Sigma^{\ast}_{{\cal C}}) \big/ {\mathsf{Rel}^{\ast}}
\]
of the free strict 2-category generated by the pseudofunctorial signature $\Sigma^{\ast}_{{\cal C}}$ from Definition \ref{def:chi-signature}  under the congruence of $2$-cells ${\mathsf{Rel}^{\ast}}$ generated by the following relations:\\

\begin{center}
%

 \begin{tikzpicture}
\draw[draw=none,fill=EggShell!60](-1,2)--(1,2)-- (1,0)-- (-1,0)--(-1,2);
\node(a)[rectangle,draw=black,inner sep=0.5mm,fill=white,rounded corners=0.5mm]  at (0,1) {\footnotesize ${\chi}_{({\bf f},{\bf f})}$};
\draw[thick] (-0.5,2)to[out=-90,in=120](a)to[out=60,in=-90](0.5,2); 
\draw[thick] (-0.5,0)to[out=90,in=-120](a)to[out=-60,in=90](0.5,0); 
\node at (-0.5,2.2) {\scriptsize  $f_1^{\ast}$}; 
\node at (-0.5,-0.2) {\scriptsize  $f_1^{\ast}$}; \node at (0.5,2.2) {\scriptsize  $f_n^{\ast}$}; 
\node at (0.5,-0.2) {\scriptsize  $f_n^{\ast}$}; 
 \node at (0,-0.2) {\tiny $\cdot\!\!\cdot\!\!\cdot$}; \node at (0,2.2) {\tiny $\cdot\!\!\cdot\!\!\cdot$}; \node at (0,0.15) {\tiny $\cdot\!\!\cdot\!\!\cdot$}; \node at (0,1.85) {\tiny $\cdot\!\!\cdot\!\!\cdot$};
\end{tikzpicture}\enspace \raisebox{1.3cm}{$\overset{(1)}{=}$}\enspace \begin{tikzpicture}
\draw[draw=none,fill=EggShell!60](-1,2)--(1,2)-- (1,0)-- (-1,0)--(-1,2);
 \draw[thick] (-0.5,2)to[out=-90,in=90](-0.5,0); \draw[thick] (0.5,2)to[out=-90,in=90](0.5,0); 
\node at (-0.5,2.2) {\scriptsize  $f_1^{\ast}$}; 
\node at (-0.5,-0.2) {\scriptsize  $f_1^{\ast}$}; \node at (0.5,2.2) {\scriptsize  $f_n^{\ast}$}; 
\node at (0.5,-0.2) {\scriptsize  $f_n^{\ast}$}; 
 \node at (0,-0.2) {\tiny $\cdot\!\!\cdot\!\!\cdot$}; \node at (0,2.2) {\tiny $\cdot\!\!\cdot\!\!\cdot$}; \node at (0,1) {\tiny $\cdot\!\!\cdot\!\!\cdot$}; 
\end{tikzpicture} \quad\quad\quad\quad  \begin{tikzpicture}
\draw[draw=none,fill=EggShell!60](-1,2)--(1,2)-- (1,0)-- (-1,0)--(-1,2);
\node(a)[rectangle,draw=black,inner sep=0.5mm,fill=white,rounded corners=0.5mm]  at (0,1.35) {\footnotesize ${\chi}_{({\bf f},{\bf g})}$};
\node(b)[rectangle,draw=black,inner sep=0.5mm,fill=white,rounded corners=0.5mm]  at (0,0.65) {\footnotesize ${\chi}_{({\bf g},{\bf h})}$};
\draw[thick] (-0.5,2)to[out=-90,in=120](a)to[out=60,in=-90](0.5,2); 
\draw[thick] (-0.5,0)to[out=90,in=-120](b)to[out=-60,in=90](0.5,0);  
\draw[thick] (a) to[out=-120,in=120] (b) to[out=60,in=-60] (a);
\node at (-0.5,2.2) {\scriptsize  $f_1^{\ast}$}; 
\node at (-0.5,-0.2) {\scriptsize  $h_1^{\ast}$}; \node at (0.5,2.2) {\scriptsize  $f_n^{\ast}$}; 
\node at (0.5,-0.2) {\scriptsize  $h_m^{\ast}$}; 
 \node at (0,-0.2) {\tiny $\cdot\!\!\cdot\!\!\cdot$}; \node at (0,2.2) {\tiny $\cdot\!\!\cdot\!\!\cdot$}; \node at (0,0.15) {\tiny $\cdot\!\!\cdot\!\!\cdot$}; \node at (0,1.85) {\tiny $\cdot\!\!\cdot\!\!\cdot$}; \node at (0,1) {\tiny $\cdot\!\!\cdot\!\!\cdot$}; 
\end{tikzpicture}\enspace \raisebox{1.3cm}{$\overset{(2)}{=}$}\enspace  \begin{tikzpicture}
\draw[draw=none,fill=EggShell!60](-1,2)--(1,2)-- (1,0)-- (-1,0)--(-1,2);
\node(a)[rectangle,draw=black,inner sep=0.5mm,fill=white,rounded corners=0.5mm]  at (0,1) {\footnotesize ${\chi}_{({\bf f},{\bf h})}$};
\draw[thick] (-0.5,2)to[out=-90,in=120](a)to[out=60,in=-90](0.5,2); 
\draw[thick] (-0.5,0)to[out=90,in=-120](a)to[out=-60,in=90](0.5,0);  
\node at (-0.5,2.2) {\scriptsize  $f_1^{\ast}$}; 
\node at (-0.5,-0.2) {\scriptsize  $h_1^{\ast}$}; \node at (0.5,2.2) {\scriptsize  $f_n^{\ast}$}; 
\node at (0.5,-0.2) {\scriptsize  $h_m^{\ast}$}; 
 \node at (0,-0.2) {\tiny $\cdot\!\!\cdot\!\!\cdot$}; \node at (0,2.2) {\tiny $\cdot\!\!\cdot\!\!\cdot$}; \node at (0,0.15) {\tiny $\cdot\!\!\cdot\!\!\cdot$}; \node at (0,1.85) {\tiny $\cdot\!\!\cdot\!\!\cdot$}; 
\end{tikzpicture} 
\vspace{0.3cm}

\begin{tikzpicture}
\draw[draw=none,fill=EggShell!60](-1,2)--(1,2)-- (1,0)-- (-1,0)--(-1,2);
\node(a)[rectangle,draw=black,inner sep=0.5mm,fill=white,rounded corners=0.5mm]  at (0.1,1) {\footnotesize ${\chi}_{({\bf f},{\bf g})}$};
\draw[thick] (-0.75,0) to[out=90,in=-90] (-0.75,2);
\draw[thick] (-0.4,2) to[out=-90,in=120] (a) to[out=60,in=-90] (0.6,2);
\draw[thick] (-0.4,0) to[out=90,in=-120] (a) to[out=-60,in=90] (0.6,0);
\node at (-0.75,2.2) {\scriptsize $a^{\ast}$};\node at (-0.4,2.2) {\scriptsize  $f_1^{\ast}$};\node at (0.1,2.2)  {\tiny $\cdot\!\!\cdot\!\!\cdot$}; \node at (0.1,-0.2)  {\tiny $\cdot\!\!\cdot\!\!\cdot$};\node at (0.6,2.2) {\scriptsize $f_n^{\ast}$};
\node at (-0.75,-0.2) {\scriptsize  $a^{\ast}$};\node at (-0.4,-0.23) {\scriptsize $g_1^{\ast}$};\node at (0.6,-0.23) {\scriptsize  $g_m^{\ast}$};
\node at (0.1,1.85)  {\tiny $\cdot\!\!\cdot\!\!\cdot$}; \node at (0.1,0.15)  {\tiny $\cdot\!\!\cdot\!\!\cdot$}; 
\end{tikzpicture} 
\enspace \raisebox{1.3cm}{$\overset{(3.1)}{=}$}\enspace  \begin{tikzpicture}
\draw[draw=none,fill=EggShell!60](-1,2)--(1,2)-- (1,0)-- (-1,0)--(-1,2);
\node(a)[rectangle,draw=black,inner sep=0.5mm,fill=white,rounded corners=0.5mm]  at (0,1) {\footnotesize ${\chi}_{({\bf f}\cdot a,{\bf g}\cdot a)}$};
\draw[thick] (-0.6,2)to[out=-90,in=120](a)to[out=100,in=-90](-0.3,2); 
\draw[thick] (0.6,2) to[out=-90,in=60] (a);
\draw[thick] (-0.6,0)to[out=90,in=-120](a)to[out=-100,in=90](-0.3,0); 
\draw[thick] (0.6,0) to[out=90,in=-60] (a);
\node at (-0.6,2.2) {\scriptsize  $a^{\ast}$};\node at (0.6,2.2) {\scriptsize   $f_n^{\ast}$};\node at (-0.3,2.2) {\scriptsize   $f_1^{\ast}$};
\node at (-0.6,-0.2) {\scriptsize  $a^{\ast}$};\node at (0.6,-0.2) {\scriptsize   $g_m^{\ast}$};\node at (-0.3,-0.2) {\scriptsize   $g_1^{\ast}$};\node at (0.1,2.2) {\tiny $\cdot\!\!\cdot\!\!\cdot$};\node at (0.1,-0.2) {\tiny $\cdot\!\!\cdot\!\!\cdot$};\node at (0.1,1.85) {\tiny $\cdot\!\!\cdot\!\!\cdot$};\node at (0.1,0.15) {\tiny $\cdot\!\!\cdot\!\!\cdot$};
\end{tikzpicture}
\quad\quad\quad\quad
\begin{tikzpicture}
\draw[draw=none,fill=EggShell!60](-1,2)--(1,2)-- (1,0)-- (-1,0)--(-1,2);
\node(a)[rectangle,draw=black,inner sep=0.5mm,fill=white,rounded corners=0.5mm]  at (-0.1,1) {\footnotesize ${\chi}_{({\bf f},{\bf g})}$};
\draw[thick] (0.75,0) to[out=90,in=-90] (0.75,2);
\draw[thick] (0.3,2) to[out=-90,in=60] (a) to[out=120,in=-90] (-0.5,2);\draw[thick] (-0.5,0) to[out=90,in=-120] (a) to[out=-60,in=90] (0.3,0);
\node at (0.75,2.2) {\scriptsize  $b^{\ast}$};\node at (0.3,2.2) {\scriptsize $f_n^{\ast}$};\node at (-0.5,2.2) {\scriptsize $f_1^{\ast}$}; \node at (-0.1,2.2) {\tiny $\cdot\!\!\cdot\!\!\cdot$};\node at (-0.1,1.85) {\tiny $\cdot\!\!\cdot\!\!\cdot$};
\node at (0.75,-0.2) {\scriptsize $b^{\ast}$};\node at (0.3,-0.2) {\scriptsize $g_m^{\ast}$};\node at (-0.5,-0.2) {\scriptsize  $g_1^{\ast}$}; \node at (-0.1,-0.2) {\tiny $\cdot\!\!\cdot\!\!\cdot$};  \node at (-0.1,0.15) {\tiny $\cdot\!\!\cdot\!\!\cdot$}; 
\end{tikzpicture} \enspace \raisebox{1.3cm}{$\overset{(3.2)}{=}$}\enspace \begin{tikzpicture}
\draw[draw=none,fill=EggShell!60](-1,2)--(1,2)-- (1,0)-- (-1,0)--(-1,2);
\node(a)[rectangle,draw=black,inner sep=0.5mm,fill=white,rounded corners=0.5mm]  at (0,1) {\footnotesize ${\chi}_{(b\cdot{\bf f},b\cdot {\bf g})}$};
\draw[thick] (0.6,2)to[out=-90,in=60](a)to[out=80,in=-90](0.3,2); 
\draw[thick] (-0.6,2) to[out=-90,in=120] (a);
\draw[thick] (-0.6,0)to[out=90,in=-120](a)to[out=-80,in=90](0.3,0); 
\draw[thick] (0.6,0) to[out=90,in=-60] (a);
\node at (-0.6,2.2) {\scriptsize $f_1^{\ast}$};\node at (0.3,2.2) {\scriptsize $f_n^{\ast}$};
\node at (-0.6,-0.2) {\scriptsize $g_1^{\ast}$};\node at (0.3,-0.2) {\scriptsize $g_m^{\ast}$};
 \node at (-0.2,2.2) {\tiny $\cdot\!\!\cdot\!\!\cdot$}; \node at (-0.2,-0.2) {\tiny $\cdot\!\!\cdot\!\!\cdot$};
 \node at (-0.2,1.85) {\tiny $\cdot\!\!\cdot\!\!\cdot$}; \node at (-0.2,0.15) {\tiny $\cdot\!\!\cdot\!\!\cdot$};
\node at (0.66,2.2) {\scriptsize $b^{\ast}$};\node at (0.66,-0.2) {\scriptsize $b^{\ast}$};
\end{tikzpicture}
\end{center}

\noindent which apply to all polygon types,  including degenerate ones. In particular, in (1),  $({\bf f},{\bf f})$ is allowed to be $(\emptyset_B, \emptyset_B)$ for some $B\in {\it Ob}({\cal C})$ (in which case the rule, whose display no longer involves explicit strings, declares $\chi_{(\emptyset_B,\emptyset_B)}$ to be the identity 2-cell  ${\mathrm{Id}}_{{\mathrm{Id}}_{{\cal E}_B}}$), and, in (2), the common path ${\bf g}$ is  allowed to be empty as well (in which case there are no strings connecting the two vertices on the left hand side of the rule). We refer to  (1), (2), (3.1) and (3.2)   as  {\em identity}, {\em vertical pasting}, {\em left whiskering}  and {\em right whiskering}, respectively. \hfill$\square$
\end{definition}
%

The following lemma exhibits certain equalities of string diagrams which are easily derived from the generating relations.
\begin{lemma}\label{easy}
The following equalities of string diagrams hold in  {${\mathsf{Fib}}_{\cal C}$}:
\begin{itemize}
\item[{\it{a)}}] {$\chi$-invertibility}:
\begin{center} 
\raisebox{-1.4cm}{\begin{tikzpicture}
\draw[draw=none,fill=EggShell!60](-1,2)--(1,2)-- (1,0)-- (-1,0)--(-1,2);
\node(a)[rectangle,draw=black,inner sep=0.5mm,fill=white,rounded corners=0.5mm]  at (0,1.35) {\footnotesize ${\chi}_{({\bf f},{\bf g})}$};
\node(b)[rectangle,draw=black,inner sep=0.5mm,fill=white,rounded corners=0.5mm]  at (0,0.65) {\footnotesize ${\chi}_{({\bf f},{\bf g})^{\it op}}$};
\draw[thick] (-0.5,2)to[out=-90,in=120](a)to[out=60,in=-90](0.5,2);
 \draw[thick] (-0.5,0)to[out=90,in=-120](b)to[out=-60,in=90](0.5,0); 
\draw[thick] (a) to[out=-120,in=120] (b) to[out=60,in=-60] (a);
\node at (-0.5,2.2) {\scriptsize $f_1^{\ast}$};  \node at (0.5,2.2) {\scriptsize $f_n^{\ast}$};    \node at (0,-0.2) {\tiny $\cdot\!\!\cdot\!\!\cdot$};
\node at (-0.5,-0.2) {\scriptsize $f_1^{\ast}$};  \node at (0.5,-0.2) {\scriptsize$f_n^{\ast}$}; \node at (0,2.2) {\tiny $\cdot\!\!\cdot\!\!\cdot$};
\node at (0,0.15) {\tiny $\cdot\!\!\cdot\!\!\cdot$};\node at (0,1) {\tiny $\cdot\!\!\cdot\!\!\cdot$};\node at (0,1.85) {\tiny $\cdot\!\!\cdot\!\!\cdot$};
\end{tikzpicture}
\enspace \raisebox{1.3cm}{$=$}\enspace   \begin{tikzpicture}
\draw[draw=none,fill=EggShell!60](-1,2)--(1,2)-- (1,0)-- (-1,0)--(-1,2);
\draw[thick] (-0.5,2)to[out=-90,in=90](-0.5,0);  \draw[thick] (0.5,2)to[out=-90,in=90](0.5,0); 
\node at (-0.5,2.2) {\scriptsize $f_1^{\ast}$};  \node at (0.5,2.2) {\scriptsize $f_n^{\ast}$}; \node at (0,-0.2) {\tiny $\cdot\!\!\cdot\!\!\cdot$};
\node at (-0.5,-0.2) {\scriptsize $f_1^{\ast}$}; \node at (0.5,-0.2) {\scriptsize $f_n^{\ast}$}; \node at (0,2.2) {\tiny $\cdot\!\!\cdot\!\!\cdot$};
\node at (0,1) {\tiny $\cdot\!\!\cdot\!\!\cdot$}; 
\end{tikzpicture} \enspace \raisebox{1.3cm}{$=$}\enspace  
\begin{tikzpicture}
\draw[draw=none,fill=EggShell!60](-1,2)--(1,2)-- (1,0)-- (-1,0)--(-1,2);
\node(a)[rectangle,draw=black,inner sep=0.5mm,fill=white,rounded corners=0.5mm]  at (0,1.35) {\footnotesize ${\chi}_{({\bf g},{\bf f})^{\it op}}$};
\node(b)[rectangle,draw=black,inner sep=0.5mm,fill=white,rounded corners=0.5mm]  at (0,0.65) {\footnotesize ${\chi}_{({\bf g},{\bf f})}$};
\draw[thick] (-0.5,2)to[out=-90,in=120](a)to[out=60,in=-90](0.5,2);
 \draw[thick] (-0.5,0)to[out=90,in=-120](b)to[out=-60,in=90](0.5,0); 
\draw[thick] (a) to[out=-120,in=120] (b) to[out=60,in=-60] (a);
\node at (-0.5,2.2) {\scriptsize $f_1^{\ast}$};  \node at (0.5,2.2) {\scriptsize $f_n^{\ast}$};    \node at (0,-0.2) {\tiny $\cdot\!\!\cdot\!\!\cdot$};
\node at (-0.5,-0.2) {\scriptsize $f_1^{\ast}$};  \node at (0.5,-0.2) {\scriptsize$f_n^{\ast}$}; \node at (0,2.2) {\tiny $\cdot\!\!\cdot\!\!\cdot$};
\node at (0,0.15) {\tiny $\cdot\!\!\cdot\!\!\cdot$};\node at (0,1) {\tiny $\cdot\!\!\cdot\!\!\cdot$};\node at (0,1.85) {\tiny $\cdot\!\!\cdot\!\!\cdot$};
\end{tikzpicture}} \end{center}
\item[{\it{b)}}] {(generalized) whiskering}:
\begin{center}  \raisebox{-1.4cm}{\begin{tikzpicture}
\draw[draw=none,fill=EggShell!60](-1,2)--(1,2)-- (1,0)-- (-1,0)--(-1,2);
\node(a)[rectangle,draw=black,inner sep=0.4mm,fill=white,rounded corners=0.5mm]  at (0,1) {\footnotesize ${\chi}_{({\bf f},{\bf g})}$};
\draw[thick] (-0.2,2)to[out=-90,in=120](a)to[out=60,in=-90](0.2,2);
\draw[thick] (-0.2,0)to[out=90,in=-120](a)to[out=-60,in=90](0.2,0); 
\draw[thick](-0.9,2)--(-0.9,0);\draw[thick](-0.5,2)--(-0.5,0);\draw[thick](0.9,2)--(0.9,0);\draw[thick](0.5,2)--(0.5,0);
\node at (0,1.85)  {\tiny $\cdot\!\!\cdot\!\!\cdot$};\node at (-0.65,1.85)  {\tiny $\cdot\!\!\cdot\!\!\cdot$};\node at (0.65,1.85)  {\tiny $\cdot\!\!\cdot\!\!\cdot$};
 \node at (0,0.15)  {\tiny $\cdot\!\!\cdot\!\!\cdot$};\node at (-0.65,0.15)  {\tiny $\cdot\!\!\cdot\!\!\cdot$};\node at (0.65,0.15)  {\tiny $\cdot\!\!\cdot\!\!\cdot$};
\node at (-0.18,2.2) {\scriptsize $f_1^{\ast}$}; \node at (0,2.2)  {\tiny $\cdot\!\!\cdot\!\!\cdot$}; \node at (0.2,2.2) {\scriptsize  $f_n^{\ast}$};
\node at (-0.95,2.18) {\scriptsize  $h_1^{\ast}$}; \node at (-0.75,2.2)  {\tiny $\cdot\!\!\cdot\!\!\cdot$}; \node at (-0.5,2.18) {\scriptsize  $h_p^{\ast}$};
\node at (0.5,2.18) {\scriptsize  $k_1^{\ast}$}; \node at (0.75,2.2)  {\tiny $\cdot\!\!\cdot\!\!\cdot$}; \node at (0.965,2.18) {\scriptsize  $k_q^{\ast}$};
\node at (-0.95,-0.2) {\scriptsize  $h_1^{\ast}$}; \node at (-0.75,-0.2)  {\tiny $\cdot\!\!\cdot\!\!\cdot$}; \node at (-0.5,-0.2) {\scriptsize  $h_p^{\ast}$};
\node at (0.55,-0.2) {\scriptsize  $k_1^{\ast}$}; \node at (0.75,-0.2)  {\tiny $\cdot\!\!\cdot\!\!\cdot$}; \node at (0.965,-0.2) {\scriptsize  $k_q^{\ast}$};
\node at (-0.18,-0.2) {\scriptsize $g_1^{\ast}$}; \node at (0,-0.2)  {\tiny $\cdot\!\!\cdot\!\!\cdot$}; \node at (0.25,-0.2) {\scriptsize  $g_m^{\ast}$};
\end{tikzpicture}\enspace \raisebox{1.3cm}{$=$}\enspace  \begin{tikzpicture}
\draw[draw=none,fill=EggShell!60](-1,2)--(1,2)-- (1,0)-- (-1,0)--(-1,2);
\node(a)[rectangle,draw=black,inner sep=0.5mm,fill=white,rounded corners=0.5mm]  at (0,1) {\footnotesize ${\chi}_{({\bf k}\cdot{\bf f}\cdot{\bf h},{\bf k}\cdot{\bf g}\cdot{\bf h})}$};
\draw[thick] (-0.2,2)to[out=-90,in=110](a) to[out=70,in=-90](0.2,2); 
\draw[thick] (-0.2,0)to[out=90,in=-110](a)to[out=-70,in=90](0.2,0); 
 \draw[thick](-0.9,2)to[out=-90,in=140] (a) to[out=40,in=-90](0.9,2);
\draw[thick](-0.9,0)to[out=90,in=-140] (a) to[out=-40,in=90](0.9,0);
\draw[thick](-0.4,2)to[out=-90,in=120] (a) to[out=60,in=-90](0.4,2);
\draw[thick](-0.4,0)to[out=90,in=-120] (a) to[out=-60,in=90](0.4,0);
 \node at (0,1.85)  {\tiny $\cdot\!\!\cdot\!\!\cdot$};\node at (-0.65,1.85)  {\tiny $\cdot\!\!\cdot\!\!\cdot$};\node at (0.65,1.85)  {\tiny $\cdot\!\!\cdot\!\!\cdot$};
 \node at (0,0.15)  {\tiny $\cdot\!\!\cdot\!\!\cdot$};\node at (-0.65,0.15)  {\tiny $\cdot\!\!\cdot\!\!\cdot$};\node at (0.65,0.15)  {\tiny $\cdot\!\!\cdot\!\!\cdot$};
\node at (-0.18,2.2) {\scriptsize $f_1^{\ast}$}; \node at (0,2.2)  {\tiny $\cdot\!\!\cdot\!\!\cdot$}; \node at (0.2,2.2) {\scriptsize  $f_n^{\ast}$};
\node at (-0.95,2.18) {\scriptsize  $h_1^{\ast}$}; \node at (-0.75,2.2)  {\tiny $\cdot\!\!\cdot\!\!\cdot$}; \node at (-0.5,2.18) {\scriptsize  $h_p^{\ast}$};
\node at (0.5,2.18) {\scriptsize  $k_1^{\ast}$}; \node at (0.75,2.2)  {\tiny $\cdot\!\!\cdot\!\!\cdot$}; \node at (0.965,2.18) {\scriptsize  $k_q^{\ast}$};
\node at (-0.95,-0.2) {\scriptsize  $h_1^{\ast}$}; \node at (-0.75,-0.2)  {\tiny $\cdot\!\!\cdot\!\!\cdot$}; \node at (-0.5,-0.2) {\scriptsize  $h_p^{\ast}$};
\node at (0.55,-0.2) {\scriptsize  $k_1^{\ast}$}; \node at (0.75,-0.2)  {\tiny $\cdot\!\!\cdot\!\!\cdot$}; \node at (0.965,-0.2) {\scriptsize  $k_q^{\ast}$};
\node at (-0.18,-0.2) {\scriptsize $g_1^{\ast}$}; \node at (0,-0.2)  {\tiny $\cdot\!\!\cdot\!\!\cdot$}; \node at (0.25,-0.2) {\scriptsize  $g_m^{\ast}$};
\end{tikzpicture}}\end{center}
\item[{\it{c)}}] horizontal pasting:
\begin{center}
\raisebox{-1.4cm}{\begin{tikzpicture}
\draw[draw=none,fill=EggShell!60](-1,2)--(1,2)-- (1,0)-- (-1,0)--(-1,2);
\node(a)[rectangle,draw=black,inner sep=0.5mm,fill=white,rounded corners=0.5mm]  at (-0.2,1.35) {\footnotesize ${\chi}_{({\bf f},{\bf l}\cdot{\bf k})}$};
\node(b)[rectangle,draw=black,inner sep=0.5mm,fill=white,rounded corners=0.5mm]  at (0.2,0.65) {\footnotesize ${\chi}_{({\bf h}\cdot {\bf l},{\bf g})}$};
\draw[thick] (-0.5,2)to[out=-90,in=120](a)to[out=60,in=-90](0.1,2); 
 \draw[thick] (-0.1,0)to[out=90,in=-120](b)to[out=-60,in=90](0.5,0);  
\draw[thick] (a) to[out=-90,in=140] (b) to[out=90,in=-40] (a);
\draw[thick] (-0.75,0) to[out=90,in=-140] (a) to[out=-120,in=90](-0.4,0);  
\draw[thick] (0.75,2) to[out=-90,in=40] (b) to[out=60,in=-90](0.4,2);
\node at (-0.5,2.2) {\scriptsize  $f_1^{\ast}$}; \node at (0.05,2.2) {\scriptsize $f_n^{\ast}$};  \node at (-0.25,2.2)  {\tiny $\cdot\!\!\cdot\!\!\cdot$}; \node at (0.82,2.18) {\scriptsize   $h_p^{\ast}$}; \node at (0.35,2.2) {\scriptsize $h_1^{\ast}$};  \node at (0.55,2.2)  {\tiny $\cdot\!\!\cdot\!\!\cdot$}; 
\node at (-0.05,-0.2){\scriptsize   $g_1^{\ast}$};  \node at (0.55,-0.2) {\scriptsize  $g_m^{\ast}$};  \node at (-0.6,-0.2) {\tiny $\cdot\!\!\cdot\!\!\cdot$};  \node at (0.2,-0.2)  {\tiny $\cdot\!\!\cdot\!\!\cdot$}; 
\node at (-0.8,-0.22) {\scriptsize  $k_1^{\ast}$}; \node at (-0.35,-0.22) {\scriptsize  $k_q^{\ast}$}; 
\node at (-0.6,0.15) {\tiny $\cdot\!\!\cdot\!\!\cdot$};\node at (0.2,0.15) {\tiny $\cdot\!\!\cdot\!\!\cdot$};
\node at (0.6,1.85) {\tiny $\cdot\!\!\cdot\!\!\cdot$};\node at (-0.2,1.85) {\tiny $\cdot\!\!\cdot\!\!\cdot$};
\node at (0,1) {\tiny $\cdot\!\!\cdot\!\!\cdot$};
\end{tikzpicture}
\enspace \raisebox{1.3cm}{$=$}\enspace  \begin{tikzpicture}
\draw[draw=none,fill=EggShell!60](-1,2)--(1,2)-- (1,0)-- (-1,0)--(-1,2);
\node(a)[rectangle,draw=black,inner sep=0.5mm,fill=white,rounded corners=0.5mm]  at (0,1) {\footnotesize ${\chi}_{({\bf h}\cdot{\bf f},{\bf g}\cdot{\bf k})}$};
\draw[thick] (-0.75,2)to[out=-90,in=120](a)to[out=100,in=-90](-0.2,2); \draw[thick] (-0.75,0)to[out=90,in=-120](a)to[out=-100,in=90](-0.2,0); \draw[thick] (0.75,2)to[out=-90,in=60](a)to[out=-60,in=90](0.75,0); \draw[thick] (0.2,2)to[out=-90,in=80](a)to[out=-80,in=90](0.2,0); 
\node at (-0.8,2.2) {\scriptsize $f_1^{\ast}$}; \node at (-0.2,2.2) {\scriptsize $f_n^{\ast}$};  \node at (-0.5,2.2) {\tiny $\cdot\!\!\cdot\!\!\cdot$}; \node at (0.82,2.2) {\scriptsize $h_p^{\ast}$}; \node at (0.2,2.2) {\scriptsize $h_1^{\ast}$};  \node at (0.5,2.2) {\tiny $\cdot\!\!\cdot\!\!\cdot$}; 
\node at (0.2,-0.2) {\scriptsize $g_1^{\ast}$};  \node at (0.85,-0.2) {\scriptsize  $g_m^{\ast}$};  \node at (-0.5,-0.2) {\tiny $\cdot\!\!\cdot\!\!\cdot$};  \node at (0.5,-0.2) {\tiny $\cdot\!\!\cdot\!\!\cdot$}; 
\node at (-0.8,-0.22) {\scriptsize $k_1^{\ast}$}; \node at (-0.18,-0.22) {\scriptsize $k_q^{\ast}$}; 
\node at (-0.475,0.15) {\tiny $\cdot\!\!\cdot\!\!\cdot$};\node at (0.475,0.15) {\tiny $\cdot\!\!\cdot\!\!\cdot$};
\node at (0.475,1.85) {\tiny $\cdot\!\!\cdot\!\!\cdot$};\node at (-0.475,1.85) {\tiny $\cdot\!\!\cdot\!\!\cdot$};
\end{tikzpicture} \enspace \raisebox{1.3cm}{$=$}\enspace  \begin{tikzpicture}
\draw[draw=none,fill=EggShell!60](-1,2)--(1,2)-- (1,0)-- (-1,0)--(-1,2);
\node(a)[rectangle,draw=black,inner sep=0.5mm,fill=white,rounded corners=0.5mm]  at (0.2,1.35) {\footnotesize ${\chi}_{({\bf h},{\bf g}\cdot{\bf l})}$};
\node(b)[rectangle,draw=black,inner sep=0.5mm,fill=white,rounded corners=0.5mm]  at (-0.2,0.65) {\footnotesize ${\chi}_{({\bf l}\cdot {\bf f},{\bf k})}$};
\draw[thick] (-0.1,2)to[out=-90,in=120](a)to[out=60,in=-90](0.5,2); 
 \draw[thick] (-0.5,0)to[out=90,in=-120](b)to[out=-60,in=90](0.1,0);  
\draw[thick] (a) to[out=-90,in=40] (b) to[out=90,in=-140] (a);
\draw[thick] (0.75,0) to[out=90,in=-40] (a) to[out=-60,in=90](0.4,0);  
\draw[thick] (-0.75,2) to[out=-90,in=140] (b) to[out=120,in=-90](-0.4,2);
\node at (-0.8,2.2) {\scriptsize  $f_1^{\ast}$}; \node at (-0.35,2.2) {\scriptsize $f_n^{\ast}$};  \node at (-0.6,2.2)  {\tiny $\cdot\!\!\cdot\!\!\cdot$}; \node at (0.55,2.18) {\scriptsize   $h_p^{\ast}$}; \node at (-0.05,2.2) {\scriptsize $h_1^{\ast}$};  \node at (0.2,2.2)  {\tiny $\cdot\!\!\cdot\!\!\cdot$}; 
\node at (0.35,-0.2){\scriptsize   $g_1^{\ast}$};  \node at (0.82,-0.2) {\scriptsize  $g_m^{\ast}$};  \node at (-0.25,-0.2) {\tiny $\cdot\!\!\cdot\!\!\cdot$};  \node at (0.55,-0.2)  {\tiny $\cdot\!\!\cdot\!\!\cdot$}; 
\node at (-0.5,-0.22) {\scriptsize  $k_1^{\ast}$}; \node at (0.05,-0.22) {\scriptsize  $k_q^{\ast}$}; 
\node at (0.6,0.15) {\tiny $\cdot\!\!\cdot\!\!\cdot$};\node at (-0.2,0.15) {\tiny $\cdot\!\!\cdot\!\!\cdot$};
\node at (-0.6,1.85) {\tiny $\cdot\!\!\cdot\!\!\cdot$};\node at (0.2,1.85) {\tiny $\cdot\!\!\cdot\!\!\cdot$};
\node at (0,1) {\tiny $\cdot\!\!\cdot\!\!\cdot$};
\end{tikzpicture}}\end{center}
\end{itemize}
\end{lemma}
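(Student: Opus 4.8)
The plan is to derive all three families of identities directly from the four generating relations (1), (2), (3.1), (3.2), treating the items in the order a), b), c) so that each one may use the previous ones. For item a), I would apply the vertical-pasting relation (2) twice to a given polygon type $({\bf f},{\bf g})$: once with intermediate boundary ${\bf g}$, which gives $\chi_{({\bf g},{\bf f})}\circ\chi_{({\bf f},{\bf g})}=\chi_{({\bf f},{\bf f})}$, and once with intermediate boundary ${\bf f}$, which gives $\chi_{({\bf f},{\bf g})}\circ\chi_{({\bf g},{\bf f})}=\chi_{({\bf g},{\bf g})}$. By the identity relation (1) the right-hand sides are the identity $2$-cells on ${\bf f}^{\ast}$ and on ${\bf g}^{\ast}$, so $\chi_{({\bf f},{\bf g})}$ is invertible with inverse $\chi_{({\bf g},{\bf f})}=\chi_{({\bf f},{\bf g})^{\it op}}$, which is exactly the displayed chain. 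Since (1) and (2) are postulated for degenerate polygon types as well, those cases require no separate treatment.

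For item b), I would induct on the total number $p+q$ of ambient (straight) strings, the base case $p=q=0$ being vacuous. For the inductive step, let $h_p^{\ast}$ be the left string nearest the box (when $p>0$) and apply the left-whiskering relation (3.1) to the sub-diagram formed by that string together with $\chi_{({\bf f},{\bf g})}$ — which is legitimate because $\mathsf{Rel}^{\ast}$ is a congruence, so a rule may be fired inside any context — replacing the box by $\chi_{({\bf f}\cdot h_p,\,{\bf g}\cdot h_p)}$ and lowering $p$ by one; symmetrically, a rightmost string $k_1^{\ast}$ is absorbed by the right-whiskering relation (3.2). Iterating, and using that concatenation of paths is strictly associative, one is left with the single box $\chi_{({\bf k}\cdot{\bf f}\cdot{\bf h},\,{\bf k}\cdot{\bf g}\cdot{\bf h})}$.

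For item c), I would first use the interchange law, in the form of the sliding equalities \eqref{sliding}, to rewrite each of the two outer diagrams as a genuine top-to-bottom composite of two whiskered copies of $\chi$. In the first diagram the upper constituent is $\chi_{({\bf f},{\bf l}\cdot{\bf k})}$ with the ${\bf h}$-strings running straight past it on the right, which by item b) equals $\chi_{({\bf h}\cdot{\bf f},\,{\bf h}\cdot{\bf l}\cdot{\bf k})}$, and the lower constituent is $\chi_{({\bf h}\cdot{\bf l},{\bf g})}$ with the ${\bf k}$-strings passing on the left, equal by item b) to $\chi_{({\bf h}\cdot{\bf l}\cdot{\bf k},\,{\bf g}\cdot{\bf k})}$; these two boxes share the boundary $({\bf h}\cdot{\bf l}\cdot{\bf k})^{\ast}$, so vertical pasting (2) collapses them to $\chi_{({\bf h}\cdot{\bf f},\,{\bf g}\cdot{\bf k})}$, the middle diagram. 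The third diagram is treated symmetrically, whiskering $\chi_{({\bf h},{\bf g}\cdot{\bf l})}$ by ${\bf f}$ on the left and $\chi_{({\bf l}\cdot{\bf f},{\bf k})}$ by ${\bf g}$ on the right, so that both have common boundary $({\bf g}\cdot{\bf l}\cdot{\bf f})^{\ast}$, and then applying (2) once more. I expect the only genuinely delicate point to be the bookkeeping here: one must check that, after the whiskerings, the two intermediate $1$-cells agree on the nose as ordered composites of the generators $(-)^{\ast}$ — this is where the contravariance of $(-)^{\ast}$, so that string order is the reverse of path order, and the strict associativity of path concatenation are used — and that the diagonal placement of the $\chi$-boxes in the given diagrams is correctly straightened via \eqref{sliding} before (2) is invoked; everything else is an immediate instance of a generating relation.
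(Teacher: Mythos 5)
Your proposal is correct and follows exactly the route of the paper's (one-sentence) proof: (a) by vertical pasting plus the identity relation, (b) by iterating the two whiskering relations, and (c) by straightening via interchange, reducing each constituent to a single box via (b), and then vertically pasting. The extra detail you supply -- the induction in (b) and the explicit check that the intermediate boundaries agree in (c) -- is a faithful elaboration of what the paper leaves implicit.
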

\begin{proof}
$\chi$-invertibility follows by vertical pasting and identity, generalized whiskering follows by iterated left and right whiskering and horizontal pasting follows by generalized whiskering and vertical pasting.  
\end{proof}

\smallskip

We now prove the key coherence result. 
 \begin{theorem}[$\chi$-coherence]\label{chi-coherence}
The 2-category $\mathsf{Fib}_{\cal C}$ is 2-thin. 
\end{theorem}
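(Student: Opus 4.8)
The plan is to deduce $2$-thinness from a \emph{normalization} statement: \emph{every $2$-cell of $\mathsf{Fib}_{\cal C}$ equals, in the quotient, a single generating $2$-cell $\chi_{({\bf f},{\bf g})}$.} First I would record that, since the congruence ${\mathsf{Rel}^{\ast}}$ is imposed on $2$-cells only, the $1$-cells of $\mathsf{Fib}_{\cal C}$ are exactly the words $f_n^{\ast}\cdots f_1^{\ast}$ in the generating $1$-cells, so that ${\bf f}\mapsto{\bf f}^{\ast}$ is a bijection from paths in ${\cal C}$ (empty paths included) to $1$-cells of $\mathsf{Fib}_{\cal C}$. Hence a parallel pair of $1$-cells is $({\bf f}^{\ast},{\bf g}^{\ast})$ for a \emph{unique} parallel pair of paths $({\bf f},{\bf g})$, and there is a generating $2$-cell with that source and target --- necessarily $\chi_{({\bf f},{\bf g})}$ --- exactly when $[{\bf f}]=[{\bf g}]$. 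Granting the normalization statement, any $2$-cell $\alpha:{\bf f}^{\ast}\Rightarrow{\bf g}^{\ast}$ equals $\chi_{({\bf f},{\bf g})}$, so parallel $2$-cells coincide; in fact each hom-set of $\mathsf{Fib}_{\cal C}$ is then a singleton if $[{\bf f}]=[{\bf g}]$ and empty otherwise, which is the syntactic incarnation of the coherence theorem for pseudofunctors.

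To prove the normalization statement I would show that the class $S$ of $2$-cells equal to some $\chi_{({\bf f},{\bf g})}$ contains every $2$-cell of $\mathsf{Fib}_{\cal C}$. Since the $2$-cells of $\mathsf{Fib}_{\cal C}$ are generated from the $\chi$'s and the identity $2$-cells under vertical composition $\circ$ and horizontal composition $\cdot$, it suffices to see that $S$ contains these and is closed under $\circ$ and $\cdot$. The generators lie in $S$ by definition; each identity $2$-cell is $\mathrm{Id}_{{\bf h}^{\ast}}$ for a unique path ${\bf h}$ and equals $\chi_{({\bf h},{\bf h})}$ by relation (1) (with degenerate instance $\mathrm{Id}_{\mathrm{Id}_{{\cal E}_B}}=\chi_{(\emptyset_B,\emptyset_B)}$). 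For closure under $\circ$: if two generators are vertically composable, then their matching boundary $1$-cell is ${\bf g}^{\ast}$ for a single path ${\bf g}$ by the bijection, so relation (2) applies literally and gives $\chi_{({\bf g},{\bf h})}\circ\chi_{({\bf f},{\bf g})}=\chi_{({\bf f},{\bf h})}$ (valid also for degenerate polygon types). For closure under $\cdot$: a whiskering of $\chi_{({\bf f},{\bf g})}$ by $1$-cells ${\bf h}^{\ast}$ on one side and ${\bf k}^{\ast}$ on the other equals $\chi_{({\bf k}\cdot{\bf f}\cdot{\bf h},\,{\bf k}\cdot{\bf g}\cdot{\bf h})}$ by the generalized whiskering of Lemma \ref{easy}(b) (an iteration of the left/right whiskering relations (3.1) and (3.2)), and an arbitrary horizontal composite of two generators resolves, via the interchange (sliding) law, into a vertical composite of two such whiskerings --- or one may cite the horizontal-pasting identity of Lemma \ref{easy}(c) directly. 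A structural induction on the expression defining a $2$-cell then shows $S$ to be all $2$-cells of $\mathsf{Fib}_{\cal C}$, hence the normalization statement, hence $2$-thinness.

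I do not expect a real obstacle here: the relations (1), (2), (3.1), (3.2) are designed precisely so that the family $\{\chi_{({\bf f},{\bf g})}\}$ is closed under every $2$-categorical operation, which turns the statement into essentially a matter of bookkeeping. The only points needing care are that the degenerate (empty-path) polygon types are subsumed uniformly under those relations --- which is why the definition of $\mathsf{Fib}_{\cal C}$ spells the degenerate instances out --- and that an equality of composable $1$-cells forces the equality of the underlying paths, so that relation (2) applies on the nose in the vertical-composition step of the induction.
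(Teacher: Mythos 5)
Your proof is correct and follows essentially the same route as the paper's: both reduce $2$-thinness to the fact that every $2$-cell normalizes to the single generating $2$-cell $\chi_{({\bf f},{\bf g})}$ determined by its boundary, using relations (1), (2), (3.1), (3.2) together with Lemma \ref{easy}. The only difference is presentational --- you run a structural induction on the expression defining a $2$-cell (closure of the set $S$ under $\circ$ and $\cdot$), whereas the paper phrases the same normalization as a terminating rewriting procedure on string diagrams; your explicit remark that the path ${\bf g}$ is recovered uniquely from the matching boundary $1$-cell is a point the paper leaves implicit.
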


\begin{proof} 
The theorem is a  consequence of the   {\em convergence property} {\sf{(CP)}} of the rewriting system $(\mathsf{Fib}_{\cal C},\leadsto)$ defined on string diagrams of $\mathsf{Fib}_{\cal C}$ by orienting the relations (1), (2), (3.1) and (3.2) from Definition \ref{def:chi-signature} from left to right:
\begin{quote}
{\sf{(CP)}} Each string diagram $D$ has a unique   normal form ${\mathrm{NF}}(D)$.
\end{quote}  
We prove {\sf{(CP)}}. The normalization strategy for $D$ goes as follows. We first  factor $D$ as
$L\cdot \widehat D \cdot R$, where $\widehat D$ has no whiskers. Suppose that  $\widehat D:{\bf f}^{\ast}\Rightarrow {\bf g}^{\ast}$, where ${\bf f}^{\ast}$ (resp. ${\bf g}^{\ast}$) is either $f_n^{\ast}\cdots f_1^{\ast}$ (resp. $g_m^{\ast}\cdots g_1^{\ast}$) or ${{\mathrm{Id}}_{{\cal E}_B}}$. We   use vertical and horizontal pasting to succesively contract the edges of $\widehat D$, i.e., to merge pairs of adjacent vertices into a single vertex. Each rewrite step strictly reduces the number of vertices, while fixing the boundary 1-cells, so the process terminates at a single node diagram $\chi_{({\bf f},{\bf g})}$, if ${\bf f}^{\ast}\neq {\bf g}^{\ast}$, or at ${\mathrm{Id}}_{{\bf f}^{\ast}}$ if ${\bf f}^{\ast}={\bf g}^{\ast}$.  We finish by reattaching the whiskers $L$ and $R$ and applying generalized whiskering (Lemma \ref{easy}(b)), now oriented from left to right,  to obtain $\mathrm{NF}(D)=\chi_{(L\cdot{\bf f}\cdot R,L\cdot {\bf g}\cdot R)}$ (or $\mathrm{NF}(D)=\mathrm{id}_{L\cdot {\bf f}^{\ast}\cdot R}$). Therefore,  $\mathrm{NF}(D)$  corresponds to the generating 2-cell of the   polygon type determined by the boundary 1-cells, and thus it is unique. \end{proof}

Theorem 1 says that any  two 2-cells in  $\mathsf{Fib}_{\cal C}$ with the same source and target are equal. The practicality of this result  is reflected in the fact that  it allows us to replace any   string diagram  built on the pseudofunctorial signature  by another such diagram having the same boundary. We shall use this extensively in  Section 4 when proving the B\' enabou-Roubaud theorem, as a valid string diagram transformation within a larger class of string diagrams that we define next.

\subsection{The  2-category  $\mathsf{BiFib}_{\cal C}$ encoding the full bifibrational structure} We now extend the 2-category $\mathsf{Fib}_{\cal C}$ by adding left adjoints to pullback functors, as well as the inverses of Beck-Chevalley transformations.

\begin{definition}[The full bifibrational string-diagrammatic category ${\mathsf{BiFib}}_{\cal C}$]\label{fsd} The full bifibrational  string-diagrammatic category ${\mathsf{BiFib}}_{\cal C}$ is the quotient 
$${\mathsf{BiFib}}_{\cal C}:=\mathcal{F}(\Sigma_{{\cal C}}) \big/ {\mathsf{Rel}},$$
where the generating 2-polygraph $\Sigma_{{\cal C}}$ is defined as follows:\\[-0.1cm]
\begin{itemize}
\item 0-cells: $\Sigma^0_{\cal C}:=(\Sigma^{\ast}_{\cal C})^0$;\\[-0.1cm]
\item 1-cells: $\Sigma^1_{\cal C}:=(\Sigma^{\ast}_{\cal C})^1\cup \Sigma^{!}_{\cal C}$;\\[-0.1cm]
\item 2-cells: $\Sigma^2_{\cal C}:=(\Sigma^{\ast}_{\cal C})^2\cup \Sigma^{\mathsf{Adj}}_{\cal C} \cup \Sigma^{\mathsf{\overline{BC}}}_{\cal C}$,\\[-0.1cm]
\end{itemize}
where 
$$\begin{array}{rcl}
\Sigma^{!}_{\cal C}&:=&\{f_!:{\cal E}_A\rightarrow {\cal E}_B\,\,|\,\, f:A\rightarrow B\in {\it Arr}({\cal C})\},\\[0.2cm]
\Sigma^{\mathsf{Adj}}_{\cal C}&:=&\{\eta_h: \epsilon_{{\cal E}_C} \Rightarrow h^{*}  h_{!},\, \,\varepsilon_h:h_{!}  h^{*}\Rightarrow \epsilon_{{\cal E}_D}\,\,|\,\, h:C\to D\in {\it Arr}({\cal C})\},\\
\Sigma^{\mathsf{\overline{BC}}}_{\cal C}&:=&\biggl\{\overline{\sf{BC}}_P:b^{\ast}d_{!}\Rightarrow c_!a^{\ast}\,\,|\,\, P=
\raisebox{-0.85cm}{\begin{tikzpicture}
\node(a) at (0,0) {$C$};\node(b) at (1,0) {$D$};\node(c) at (0,1) {$A$};\node(d) at (1,0.975) {$B$};
\draw[->](a)--(b);\draw[->] (c)--(0.8,1) ;\draw[->](1,0.8)--(b);\draw[->](c)--(a);
\node(a') at (0.5,1.2) {\small $a$};
\node(b') at (0.5,-0.2) {\small $b$};
\node(c') at (1.2,0.5) {\small $d$};
\node(d') at (-0.2,0.5) {\small $c$};
\end{tikzpicture}} \mbox{ is a pullback square in } {\cal C}\biggr\},
\end{array}$$
and where ${\mathsf{Rel}}$ is the congruence of 2-cells generated by the generators of  ${\mathsf{Rel}}^{\ast}$ from Definition \ref{fff} and the following four relations: 
\begin{center}
\begin{tikzpicture}
\draw[draw=none,fill=EggShell!60](-1,2)--(1,2)-- (1,0)-- (-1,0)--(-1,2);
\node(al) [circle,fill=white,draw=black,inner sep=0.3mm] at (-0.35,1.5) {\footnotesize $\eta_h$};
\node(an) [circle,fill=white,draw=black,inner sep=0.25mm] at (0.35,0.5) {\footnotesize $\varepsilon_h$};
\draw[thick] (-0.75,0) to[out=90,in=-120] (al);\draw[thick] (al) to[out=-60,in=120] (an);
\draw[thick] (0.75,2) to[out=-90,in=60] (an);
\node at (0.75,2.2) {\small $h_{!}$};\node at (-0.75,-0.2) {\small $h_!$};
\end{tikzpicture}\enspace \raisebox{1.3cm}{$\overset{(4.1)}{=}$}\enspace \begin{tikzpicture}
\draw[draw=none,fill=EggShell!60](-1,2)--(1,2)-- (1,0)-- (-1,0)--(-1,2);
\draw[thick] (0,0) to[out=90,in=-90] (0,2);
\node at (0,2.2) {\small $h_{!}$};\node at (0,-0.2) {\small $h_!$};
\end{tikzpicture} 
\quad\quad\quad\quad
\begin{tikzpicture}
\draw[draw=none,fill=EggShell!60](-1,2)--(1,2)-- (1,0)-- (-1,0)--(-1,2);
\node(al) [circle,fill=white,draw=black,inner sep=0.3mm] at (0.35,1.5) {\footnotesize $\eta_h$};
\node(an) [circle,fill=white,draw=black,inner sep=0.25mm] at (-0.35,0.5) {\footnotesize $\varepsilon_h$};
\draw[thick] (-0.75,2) to[out=-90,in=120] (an);\draw[thick] (al) to[out=-120,in=60] (an);
\draw[thick] (0.75,0) to[out=90,in=-60] (al);
\node at (-0.75,2.2) {\small $h^{\ast}$};\node at (0.75,-0.2) {\small $h^{\ast}$};
\end{tikzpicture}\enspace  \raisebox{1.3cm}{$\overset{(4.2)}{=}$} \enspace \begin{tikzpicture}
\draw[draw=none,fill=EggShell!60](-1,2)--(1,2)-- (1,0)-- (-1,0)--(-1,2);
\draw[thick] (0,0) to[out=90,in=-90] (0,2);
\node at (0,2.2) {\small $h^{\ast}$};\node at (0,-0.2) {\small $h^{\ast}$};
\end{tikzpicture}
\end{center}

\begin{center}
\begin{tikzpicture}
\draw[draw=none,fill=EggShell!60](-1,2)--(1,2)-- (1,0)-- (-1,0)--(-1,2);
\node(a)[rectangle,draw=black,inner sep=0.3mm,fill=white,rounded corners=0.5mm]  at (0,0.6) {\footnotesize $\overline{\sf{BC}}_P$};
\node(b)[rectangle,draw=black,inner sep=0.5mm,fill=white,rounded corners=0.5mm]  at (0,1.4) {\footnotesize $\sf{BC}_P$};
\draw[thick] (-0.5,2)to[out=-90,in=120](b)to[out=-120,in=120](a) to[out=60,in=-60] (b) to[out=60,in=-90] (0.5,2);
\draw[thick] (-0.5,0) to[out=90,in=-120] (a)to[out=-60,in=90](0.5,0);
\node at (-0.475,2.2) {\small $a\!^{\ast}$};\node at (0.5,2.15) {\small $c_!$};
\node at (-0.475,-0.15) {\small $a\!^{\ast}$};\node at (0.5,-0.2) {\small $c_!$};
\end{tikzpicture}\enspace \raisebox{1.3cm}{$\overset{(5.1)}{=}$}\enspace \begin{tikzpicture}
\draw[draw=none,fill=EggShell!60](-1,2)--(1,2)-- (1,0)-- (-1,0)--(-1,2);
\draw[thick] (-0.5,0) to[out=90,in=-90] (-0.5,2);\draw[thick] (0.5,0) to[out=90,in=-90] (0.5,2);
\node at (-0.475,2.2) {\small $a\!^{\ast}$};\node at (0.5,2.15) {\small $c_!$};
\node at (-0.475,-0.15) {\small $a\!^{\ast}$};\node at (0.5,-0.2) {\small $c_!$};
\end{tikzpicture} \quad\quad\quad\quad \begin{tikzpicture}
\draw[draw=none,fill=EggShell!60](-1,2)--(1,2)-- (1,0)-- (-1,0)--(-1,2);
\node(a)[rectangle,draw=black,inner sep=0.5mm,fill=white,rounded corners=0.5mm]  at (0,0.6) {\footnotesize ${\sf{BC}}_P$};
\node(b)[rectangle,draw=black,inner sep=0.3mm,fill=white,rounded corners=0.5mm]  at (0,1.4) {\footnotesize $\overline{\sf{BC}}_P$};
\draw[thick] (-0.5,2)to[out=-90,in=120](b)to[out=-120,in=120](a) to[out=60,in=-60] (b) to[out=60,in=-90] (0.5,2);
\draw[thick] (-0.5,0) to[out=90,in=-120] (a)to[out=-60,in=90](0.5,0);
\node at (-0.5,2.2) {\small $d_!$};\node at (0.5,2.185) {\small $b^{\ast}$};
\node at (-0.5,-0.2) {\small $d_!$};\node at (0.5,-0.2) {\small $b^{\ast}$};
\end{tikzpicture}\enspace \raisebox{1.3cm}{$\overset{(5.2)}{=}$}\enspace \begin{tikzpicture}
\draw[draw=none,fill=EggShell!60](-1,2)--(1,2)-- (1,0)-- (-1,0)--(-1,2);
\draw[thick] (-0.5,0) to[out=90,in=-90] (-0.5,2);\draw[thick] (0.5,0) to[out=90,in=-90] (0.5,2);
\node at (-0.5,2.2) {\small $d_!$};\node at (0.5,2.185) {\small $b^{\ast}$};
\node at (-0.5,-0.2) {\small $d_!$};\node at (0.5,-0.2) {\small $b^{\ast}$};
\end{tikzpicture} 
\end{center}
with the 2-cell ${\sf{BC}}_P: c_!  a^{\ast}\Rightarrow b^{\ast}  d_{!}$ being defined by
 \begin{center}
\begin{tikzpicture}
\draw[draw=none,fill=EggShell!60](-1,2)--(1,2)--(1,0)--(-1,0)--cycle;
\node(f) [rectangle,draw=black,inner sep=0.5mm,fill=white,rounded corners=0.5mm]  at (0,1) {\footnotesize ${\sf{BC}}_P$};
\draw[thick] (-0.5,2)to[out=-90,in=120] (f) to[out=60,in=-90] (0.5,2);\draw[thick] (-0.5,0)to[out=90,in=-120] (f) to[out=-60,in=90] (0.5,0);
\node at (-0.5,2.2) {\small $a\!^{\ast}$};\node at (0.5,2.13) {\small $c_!$};
\node at (-0.5,-0.2) {\small $d_!$};\node at (0.5,-0.19) {\small $b^{\ast}$};
\end{tikzpicture} \enspace \raisebox{1.3cm}{$:=$} \enspace 
\begin{tikzpicture}
\draw[draw=none,fill=EggShell!60](-1,2)--(1,2)--(1,0)--(-1,0)--cycle;
\node(f) [rectangle,draw=black,inner sep=0.5mm,fill=white,rounded corners=0.5mm]  at (0,1) {\footnotesize $\chi_P$};
\node(al) [circle,fill=white,draw=black,inner sep=0.2mm] at (-0.35,1.5) {\footnotesize $\eta_d$};
\node(an) [circle,fill=white,draw=black,inner sep=0.3mm] at (0.35,0.5) {\footnotesize $\varepsilon_c$};
\draw[thick] (0.75,2) to[out=-90,in=60] (an);\draw[thick] (an) to[out=120,in=-60] (f)to[out=120,in=-60](al);
\draw[thick] (-0.75,0) to[out=90,in=-120] (al);
\draw[thick] (f)to[out=60,in=-90](0.3,2);
\draw[thick] (f)to[out=-120,in=90](-0.3,0);
\node at (0.3,2.2) {\small $a\!^{\ast}$};\node at (0.75,2.13) {\small $c_	!$};
\node at (-0.75,-0.2) {\small $d_!$};\node at (-0.28,-0.19) {\small $b^{\ast}$};
\end{tikzpicture} \raisebox{1cm}{.}
\end{center} \vspace{-0.2cm}\hfill$\square$
 \end{definition}

\begin{remark}[Mate construction and Beck--Chevalley transformation in ${\mathsf{BiFib}}_{\cal C}$]\label{mate}
Given a 2-cell \[
\alpha:\ g^{*}  f^{*}\Rightarrow k^{*} h^{*},
\] in ${\mathsf{BiFib}}_{\cal C}$, its {\it mate} is the composite
\[
\operatorname{mate}(\alpha)
\ :=\
(\varepsilon_k\cdot h^{*} f_{!})\ \circ\ ({k_{!}} \cdot \alpha \cdot  {f_{!}})\ \circ \ ({k_{!} g^{*}}\cdot  \eta_f  )
\ :\ k_{!}  g^{*}\Rightarrow h^{*}  f_{!},
\]
and its {\em two-fold mate} is the composite
 \[
\operatorname{2-mate}(\alpha)
\ :=\
(\varepsilon_h \cdot {f_! g_{!}})\ \circ\ ( {g_{!}} \cdot \operatorname{mate}(\alpha) \cdot  {h_{!}})\ \circ \ ({h_! k_{!}}\cdot \eta_g \cdot )
\ :\ h_!  k_{!}\Rightarrow  f_{!} g_!.
\]
The 2-cell ${\sf{BC}}_P: c_!  a^{\ast}\Rightarrow b^{\ast}  d_{!}$ from Definition \ref{fsd}   is the mate of the pseudofuncoriality 2-cell $\chi_P$ associated to the pullback square $P$ in ${\cal C}$. We shall refer to the composite 2-cell  ${\sf{BC}}_P$ as the 
 \emph{forward}  Beck--Chevalley transformation. In turn, the generator $\overline{\sf{BC}}_P$, which is the two-sided  inverse  of ${\sf{BC}}_P$ by (5.1) and (5.2), will  be called the \emph{backward} Beck--Chevalley transformation. Notice that the type of $\overline{\sf{BC}}_P$ does not match the type of $\operatorname{mate}( {\chi}_{P^{\it op}})$. This is the reason why $\overline{\sf{BC}}_P$ must appear as a primitive 2-cell in the definition of  ${\mathsf{BiFib}}_{\cal C}$.\hfill$\square$
\end{remark}

\begin{remark}\label{blahblah}
Although the string-diagrammatic 2-category ${\mathsf{BiFib}}_{\cal C}$ is presented relative to a fixed base category ${\cal C}$, this dependence is  only a matter of convenience, given that our intended interpretation of ${\mathsf{BiFib}}_{\cal C}$ is precisely a bifibration over ${\cal C}$. A fully general, base-independent 2-category  ${\mathsf{BiFib}}$ can be given by specifying the base pseudofunctorial signature relative to  purely syntactical  data given by ``object terms'', ``arrow terms'' and ``polygon-type terms'', and then extending it in the obvious way to the full bifibrational signature.  \hfill$\square$
\end{remark}

As indicated in Remark \ref{blahblah}, the string-diagrammatic 2-category ${\mathsf{BiFib}}_{\cal C}$ is constructed precisely in the way that recognizes Beck-Chevalley bifibrations over ${\cal C}$ as its models, in the  sense that each such    bifibration  $p:{\cal E}\rightarrow{\cal C}$  canonically determines a strict 2-functor
\[
\llbracket -\rrbracket_p: {\mathsf{BiFib}}_{\cal C} \to \mathsf{Cat}
\]
(and vice-versa). Indeed, $\llbracket -\rrbracket_p$ realizes\\[-0.1cm]
\begin{itemize}
\item the 0-cells of ${\mathsf{BiFib}}_{\cal C}$ as actual $p$-fibers over the objects of ${\cal C}$,\\[-0.1cm]
\item the  1-cells of ${\mathsf{BiFib}}_{\cal C}$ as (compositions of) actual pullback and direct image functors induced by cartesian and cocartesian lifts of $p$,  \\[-0.1cm]
\item  the  2-cells of ${\mathsf{BiFib}}^{\ast}_{\cal C}$ as  witnesses of the pseudofunctoriality of $f\mapsto f^{\ast}$; in particular the 2-cells  $\chi_{\emptyset_B^{b}}:({\mathrm{id}}_B)^{\ast}\Rightarrow \epsilon_B$  and $\chi_{({\bf f},{\bf g})}$, for ${\bf f}=C\xrightarrow{f\circ g} A$ and ${\bf g}=C\xrightarrow{g}B\xrightarrow{f}A$, are interpreted as canonical  natural isomorphisms $\iota_B$ and $\kappa_{f,g}$ from \eqref{canonical_isos_pseudo}, respectively,
 \\[-0.1cm]
\item the 2-cells of $\Sigma^{\mathsf{Adj}}_{\cal C}$ as units and  counits of adjunctions $f^{\ast}\dashv f_{!}:{{\cal E}_B}\rightarrow {{\cal E}_A}$, and\\[-0.1cm]
\item the 2-cells of $\Sigma^{\overline{\mathsf{BC}}}_{\cal C}$ as inverses of Beck-Chevalley transformations.\\[-0.1cm]
\end{itemize}
This allows us to reason about $p:{\cal E}\rightarrow{\cal C}$ by transfering along $\llbracket -\rrbracket_p$ the equalities of string diagrams proven in  ${\mathsf{BiFib}}_{\cal C}$. This is precisely our approach in proving the B\' enabou-Roubaud  theorem in the following section. More precisely, as announced in the introduction, we shall make three extensions of  ${\mathsf{BiFib}}_{\cal C}$, which provide syntactical frameworks for deriving  equalities  related to three particular  ways of characterizing  descent data.

\section{The B\' enabou-Roubaud  theorem}\label{thethm} 
\noindent Throughout this section, let ${\cal C}$ be a category with pullbacks, $p:{\cal E}\rightarrow {\cal C}$   a bifibration satisfying Beck-Chevalley property, and $f:B\rightarrow A$ a morphism in ${\cal C}$. In the first part of this section we describe three ways to characterize  descent data for an object $X\in {\it Ob}({\cal E}_B)$ and we state the equivalence of the three corresponding categories. The proof of the equivalence is given in the second part.

\subsection{The three categories of descent data} We recall the definitions of the Eilenberg–Moore category ${\mathsf{EM}}_p(T_f)$ and the classical category of descent data ${\mathsf{Desc}}_{p}(f)$, and we introduce the  category ${\mathsf{Act}}_p^{\mathsf{Eq}(f)}$, equivalent to the category of actions of the internal category ${\mathsf{Eq}(f)}$ on $p$ of Janelidze-Tholen from \cite{th}.

\medskip

\subsubsection{\bf The Eilenberg–Moore category ${\mathsf{EM}}_p(T_f)$}\label{algebra} Let
$T_f:=(f^{\ast}  f_{!},\eta_f,\mu)$, where $\mu:f^{\ast}\cdot\varepsilon_f\cdot f_!:T_f^{2}\rightarrow T_f$, be the monad on ${\cal E}_B$ coming from the adjunction $f^{\ast}\dashv f_{!}:{{\cal E}_B}\rightarrow {{\cal E}_A}$.  The  Eilenberg–Moore category ${\mathsf{EM}}_p(T_f)$ is the category whose objects are $T_f$-algebras, that is, pairs $(X,\alpha)$ of an object $X\in {\it{Ob}}({\cal E}_B)$ and a morphism $\alpha:T_f(X)\rightarrow X$   in ${\cal E}_B$ satisfying the following two axioms:\\[0.15cm]
\indent \hypertarget{(TA1)}{{\sf{(TA1)}}} $\alpha\circ ({\eta_f})_{X} ={\text{id}}_X$,\\[0.15cm]
\indent \hypertarget{(TA2)}{{\sf{(TA2)}}}  $\alpha\circ \mu_X=\alpha\circ T_f(\alpha)$. \\[0.15cm]
The morphisms $\phi:(X,\alpha)\rightarrow (Y,\alpha')$ are given by  arrows $\phi:X\rightarrow Y$ in ${\cal E}_B$ such that $\alpha\circ\phi=\alpha'\circ T(\phi)$. 

\smallskip

The   comparison functor $\Phi^{T_f}_p:{\cal E}_A\rightarrow{\mathsf{EM}}_{p}(T_f)$ from the Eilenberg-Moore factorization  \eqref{emfact} is defined by
$$Y\mapsto (f^{\ast}Y, f^{\ast}\cdot(\varepsilon_f)_Y).$$
The functor $f^{\ast}:{\cal E}_A\rightarrow {\cal E}_B$ is said to be  {\em monadic} if $\Phi^{T_f}_p$ is  an equivalence of categories. 

\medskip

\subsubsection{\bf The category of descent data ${\mathsf{Desc}}_{p}(f)$}  Let $f_1,f_2:B\times_A B\rightarrow B$ be the kernel pair of  $f:B\rightarrow A$, and $\pi_1,\pi_2:(B\times_A B)\times_B (B\times_A B)\rightarrow B\times_A B$  the pullback projections for $f_1$ and $f_2$, such that $f_2\circ\pi_2=f_1\circ\pi_1$.  Denote by $\pi=\langle f_1\pi_2,f_2\pi_1\rangle:(B\times_A B)\times_B (B\times_A B)\rightarrow B\times_A B$ the unique morphism whose existence is guaranteed by the universality of the pullback $(B\times_A B,f_1,f_2)$ of $f$ along itself, with respect to the triple $((B\times_A B)\times_B (B\times_A B),f_1\pi_2,f_2\pi_1)$. Finally,  let $\Delta=\langle{\it id}_B,{\it id}_B\rangle:B\rightarrow B\times_A B$ be the diagonal. 

\smallskip

In order to specify concisely the conditions that descent data must satisfy, we first introduce  abbreviations for certain compositions of pseudofunctoriality canonical isomorphisms \eqref{canonical_isos_pseudo}: we set 
\begin{equation}
d_i:=\kappa_{\Delta,f_i}^{-1}\circ\iota_B, \quad   j_i:=\kappa^{-1}_{f_i,\pi_{3-i}}\circ \kappa_{f_i,\pi},\enspace \mbox{ for } i=1,2,\quad \mbox{ and } \enspace j:=\kappa_{f_2,\pi_2}\circ\kappa^{-1}_{f_1,\pi_1}.
\label{someisos}
\end{equation}

\smallskip

An isomorphism $\varphi:f_1^{\ast}X\rightarrow f_2^{\ast}X$ in ${\cal E}_{B\times_A B}$ is called {\em a descent datum  for} $X\in{\it Ob}({\cal E}_B)$ if the following two conditions are satisfied:\\[0.15cm]
\indent \hypertarget{(DD1)}{{\sf{(DD1)}}}  $(d_2^{-1})_X\circ \Delta^{\ast}\varphi\circ (d_1)_X={\text{id}}_X$,\\[0.15cm]
\indent \hypertarget{(DD2)}{{\sf{(DD2)}}} $(\pi_1^{\ast}\cdot\varphi)\circ j_X\circ (\pi_2^{\ast}\cdot\varphi)=(j_2)_X\circ (\pi^{\ast}\cdot\varphi) \circ (j^{-1}_1)_X$.\\[0.15cm]  
 \noindent The category ${\mathsf{Desc}}_{p}(f)$ of descent data  has as objects  pairs $(X,\varphi)$ of an object $X\in {\it{Ob}}({\cal E}_B)$   and a descent datum $\varphi:f_1^{\ast}X\rightarrow f_2^{\ast}X$  for $X$. A morphism $\theta:(X,\varphi)\rightarrow (Y,\psi)$ in ${\mathsf{Desc}}_{p}(f)$  is given by an arrow $\theta:X\rightarrow Y$ in ${\cal E}_B$ such that $f^{\ast}_2 \theta \circ \varphi=\psi\circ f_1^{\ast} \theta$. 

\smallskip

The   comparison functor $\Phi^f_p:{\cal E}_A\rightarrow{\mathsf{Desc}}_{p}(f)$ from the descent factorization  \eqref{descfact} is defined by
$$Y\mapsto (f^{\ast}Y, (\kappa^{-1}_{f,f_2})_Y\circ (\kappa_{f,f_1})_Y).$$
 A morphism $f:B\rightarrow A$ in ${\cal C}$ is said to be an {\em  effective   descent morphism} if $\Phi^f_p$ is an equivalence of categories. 

\medskip

The following two remarks are about two equivalent ways to describe the objects of ${\mathsf{Desc}}_{p}(f)$.

\begin{remark}[About \hyperlink{(DD1)}{{\sf{(DD1)}}}] The conditions that make an isomorphism  $\varphi:f_1^{\ast}X\rightarrow f_2^{\ast}X$ a  descent data for $X$ were explicitly given for the first time by Janelidze-Tholen in \cite[Section 3]{JT}. The two conditions that they give are precisely our condition \hyperlink{(D2)}{{\sf{(DD2)}}} and, in place of our \hyperlink{(DD1)}{{\sf{(DD1)}}}, the following condition:\\[0.1cm]
\indent \hypertarget{(DD1)_{JT}}{{{\sf{(DD1)}}$_{\it{JT}}$}}  \enspace $(\hat{f_2})_X\circ \varphi\circ \delta_{1,X}={\mathrm{id}}_X$, \\[0.1cm]
where   $\delta_{i,X}:X\rightarrow f_i^{\ast}X$, for $i=1,2$, is the unique $p$-lift of $\Delta$ such  that $\hat{(f_i)}_X\circ \delta_{i,X}={\mathrm{id}}_X$. The condition \hyperlink{(DD1)_{JT}}{{{\sf{(DD1)}}$_{\it{JT}}$}} is equivalent to \hyperlink{(DD1)}{{\sf{(DD1)}}}. Indeed,  one sees immediately that \hyperlink{(DD1)_{JT}}{{{\sf{(DD1)}}$_{\it{JT}}$}}  is equivalent to $\varphi\circ \delta_{1,X}=\delta_{2,X}$, which is, in turn, equivalent to $\varphi\circ \hat{\Delta}_{f_1^{\ast}X}\circ (d_1)_X=\hat{\Delta}_{f_2^{\ast}X}\circ (d_2)_X$. By   the definition of $\Delta^{\ast}\varphi$, we conclude that the last equality holds if and only if $\Delta^{\ast}\varphi =(d_2)_X\circ(d^{-1}_1)_X$, which is precisely  \hyperlink{(DD1)}{{\sf{(DD1)}}}. Our choice to present \hyperlink{(DD1)}{{\sf{(DD1)}}} in this particular way is made so that  \hyperlink{(DD1)}{{\sf{(DD1)}}} has a direct string diagram representation, which is not the case for \hyperlink{(DD1)_{JT}}{{{\sf{(DD1)}}$_{\it{JT}}$}}, since its formulation involves explicit cartesian lifts.\footnote{Given a bifibration $p:{\cal E}\rightarrow {\cal C}$, one can refine the string diagram calculus so that it directly  encodes the structure of the {\em entire} total category  ${\cal E}$, not merely of the fibers. This   requires introducing an extra layer of boxes or frames,  such  that the content of each box is a string diagram in the standard sense, as well as special vertices for cartesian lifts, which are, exceptionally, always drawn outside all  boxes.}
\end{remark}
\begin{remark}[Indexed-family formulation of descent data] If $X\in{\it Ob}({\cal E}_B)$ is such that $X=f^{\ast}Y$ for some $Y\in{\it Ob}({\cal E}_A)$, then an isomorphism $\varphi_{g,h}:g^{\ast} X\rightarrow h^{\ast} X$ exists given any commutative square $f\circ g=f\circ h$ in ${\cal C}$, not necessarily a pullback. A descent data for $X$ can be given as the family of all such isomorphisms, i.e., the family indexed by all these commutative squares, which is natural in $(g,h)$, and for which we require the  following conditions to be satisfied: \\[0.15cm]
\indent \hypertarget{(DD1)'}{{\sf{(DD1)'}}}   $\varphi_{{\text{id}}_B,{\text{id}}_B}={\text{id}}_X$,\\[0.15cm]
\indent \hypertarget{(DD2)'}{{\sf{(DD2)'}}} $\varphi_{k,l}\circ\varphi_{h,k}=\varphi_{h,l}$,\\[0.15cm]
where, in \hyperlink{(DD2)'}{{\sf{(DD2)'}}}, $h,k,l$ and $f$ figure in a commutative cube, whose three edges are given by $f$ pointing to the origin of the cube $A$. The {\em indexed-family formulation} of descent data is equivalent to the {\em single-arrow formulation} on which we base  our definition of  ${\mathsf{Desc}}_{p}(f)$. The single arrow $\varphi:f_1^{\ast} X\rightarrow f_2^{\ast}X$ determines the entire family $\varphi_{h,k}$ by pullback along the universal map $\langle h,k\rangle:C\rightarrow B{\times}_A B$. Then, if $h=k={\it id}_B$, we get that $\langle h,k\rangle=\Delta:B\rightarrow B\times_A B$, and the condition \hyperlink{(DD1)'}{{\sf{(DD1)'}}}  translates to \hyperlink{(DD1)}{{\sf{(DD1)}}}  directly. In the same fashion, if we have three arrows $h,k,l:C\rightarrow B$ with $f\circ h=f\circ k=f\circ l$, then   the   condition \hyperlink{(DD2)'}{{\sf{(DD2)'}}} rewrites as   \hyperlink{(DD2)}{{\sf{(DD2)}}}. Conversely, if we start with the indexed family $\varphi_{g,h}$, we recover the single arrow as the special case $\varphi:=\varphi_{f_1,f_2}$. Now \hyperlink{(DD1)}{{\sf{(DD1)}}} and \hyperlink{(DD2)}{{\sf{(DD2)}}} follow by \hyperlink{(DD1)'}{{\sf{(DD1)'}}}  and    the instance $\varphi_{f_1\circ\pi_1,f_2\circ \pi_1}\circ\varphi_{f_1\circ\pi_2,f_2\circ\pi_2}=\varphi_{f_1\circ\pi_2,f_2\circ\pi_1}$ of  \hyperlink{(DD2)'}{{\sf{(DD2)'}}}, respectively, thanks to the naturality of the indexing.
\end{remark}

\medskip
\subsubsection{\bf The category ${\mathsf{Act}}_p^{\mathsf{Eq}(f)}$ of actions of the internal category ${\mathsf{Eq}(f)}$ on $p$} \label{actions} 
Recall that the kernel pair $f_1, f_2: B \times_A B\rightarrow B$ of $f$ gives rise to an internal category ${\mathsf{Eq}(f)}$  in $\mathcal{C}$ as follows:\\[-0.15cm]
\begin{itemize}
  \item  the ``object of objects"  ${\mathsf{Eq}(f)}_0 = B$,\\[-0.15cm]
  \item the ``object of morphisms" ${\mathsf{Eq}(f)}_1 = B \times_A B$,\\[-0.12cm]
  \item the source morphism $f_1: B\times_A B  \rightarrow B$ and the target morphism $f_2:  B\times_A B\rightarrow B$,\\[-0.15cm]
  \item the identity-assigning morphism   $\Delta=\langle {\text{id}_B},\text{id}_B\rangle :B\rightarrow B\times_A B$, and\\[-0.15cm]
  \item  the composition morphism $\pi=\langle f_1\pi_2, f_2\pi_1 \rangle:( B\times_A B)\times_B (B\times_A B) \rightarrow B\times_A B$.\\[-0.15cm]
 \end{itemize}

\smallskip

 In addition to the isomorphisms $d_i:{\mathrm{Id}}_{{\cal E}_B}\Rightarrow \Delta^{\ast} f_i^{\ast}$ and $j_i:\pi^{\ast}f^{\ast}_i\Rightarrow \pi^{\ast}_{3-i}f^{\ast}_i$, $i=1,2$,  defined in \eqref{someisos},  the definition of an action of ${\mathsf{Eq}(f)}$ on $p$ that we give below will also use the following natural transformations. We write $k_i:(f_i)_!(\pi_{3-1})_!\Rightarrow (f_i)_!\pi_!$,  $i=1,2$,  for the two-fold mate of the isomorphism $j_i$ (see Remark \ref{mate}). We define the natural transformations $\eta':{\mathrm{Id}}_{{\cal E}_B}\Rightarrow (f_2)_!f_1^{\ast}$ and $\mu': ({f_2})_! f_1^{\ast} ({f_2})_!f_1^{\ast}\rightarrow  ({f_2})_! f_1^{\ast}$ by $\eta'=(d_2^{-1}\cdot (f_2)_! f_1^{\ast})\circ (\Delta^{\ast}\cdot \eta_{f_2}\cdot f^{\ast}_1)\circ d_1$ and $\mu'=(({f_2})_!\cdot\varepsilon_{\pi}\cdot f_1^{\ast})\circ( k_2\cdot j^{-1}_1)\circ ({f_2}_!\cdot \overline{\mathsf{BC}}_P\cdot f_1^{\ast})$, respectively. 

\smallskip
 
  An \emph{action  of ${\mathsf{Eq}(f)}$ on $p$} is a pair $(X,\beta)$ of an object $X \in {\it Ob}(\mathcal E_B)$  and a morphism $\beta:({f_2})_{!}f^{\ast}_1 X\rightarrow X$ in $\mathcal E_B$, such that the following two conditions are satisfied:\\[0.15cm]
 \indent \hypertarget{(AC1)}{{\sf{(AC1)}}}  $\beta \circ \eta'_X={\text{id}}_X$, \\[0.15cm]
 \indent \hypertarget{(AC2)}{{\sf{(AC2)}}} $\beta\circ \mu'_X=\beta\circ ({f_2})_!f_1^{\ast}\beta$, \\[0.15cm]
 A morphism of actions $(X, \beta)$ and $(X', \beta')$   is a map $h: X \to X'$ in $\mathcal E_B$ such that
$h \circ \beta = \beta' \circ (f_2)_! f_1^* h$. This defines the  category ${\mathsf{Act}}_p^{\mathsf{Eq}(f)}$ of actions of ${\mathsf{Eq}(f)}$ on $p$.
 
\begin{remark}
Our  characterization of  ${\mathsf{Act}}_p^{\mathsf{Eq}(f)}$ is different than, but equivalent to, the original one due to Janelidze-Tholen. Their description of ${\mathsf{Act}}_p^{\mathsf{Eq}(f)}$ is crafted so that the equivalence  ${\mathsf{Desc}}_{p}(f)\simeq {\mathsf{Act}}_p^{\mathsf{Eq}(f)}$ can be delivered practically immediately;  in particular, their objects $(X,\beta)$ are such that the morphism $\beta$ comes in the form  $\beta:f_1^{\ast}X\rightarrow f_2^{\ast}X$ and it behaves exacly as a descent datum for $X$. Our description of $(X,\beta)$, on the other hand, portrays  directly a generalization of Bunge's notion \cite{bu} of internal (covariant) presheaf on ${\mathsf{Eq}(f)}$, in the sense that, when the given bifibration $p:{\cal E}\rightarrow {\cal C}$ is the codomain fibration ${\mathrm{cod}}:{\mathsf{Arr}}({\cal C})\rightarrow {\cal C}$, then $(X,\beta)$ is   such a presheaf  {\em on the nose}. To the best of our knowledge, this particular description of descent data has not been explicitly present in the existing literature. 
\end{remark}

\begin{theorem}[B\' enabou-Roubaud, Janelidze-Tholen]\label{equiv}
 If  a bifibration $p:{\cal E}\rightarrow {\cal C}$ satisfies the Beck-Chevalley condition, then, for each morphism $f:B\rightarrow A$ in ${\cal C}$, ${\mathsf{EM}}(T_f)\simeq{\mathsf{Desc}}_{\cal C}(f)\simeq {\mathsf{Act}}_p^{\mathsf{Eq}(f)}$. 
 \end{theorem}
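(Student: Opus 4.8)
The plan is to realise the three descent categories as categories of \emph{structured objects} living inside three string-diagrammatic enlargements of $\mathsf{BiFib}_{\cal C}$, and then to obtain the two equivalences by constructing explicit, mutually inverse diagrammatic translations between the three kinds of structure. First I would fix a generic object by adjoining to $\mathsf{BiFib}_{\cal C}$ a new $0$-cell $1$ and a generic $1$-cell $X\colon 1\to{\cal E}_B$, interpreted via $\llbracket-\rrbracket_p$ as the terminal category and an arbitrary object of ${\cal E}_B$, so that a $2$-cell whose boundary $1$-cells are built from $X$ is interpreted as an arrow of ${\cal E}_B$, exactly as in the convention on objects-as-functors. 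On top of this I would define $\mathsf{BiFib}_{\cal C}^{\mathsf{EM}}$, $\mathsf{BiFib}_{\cal C}^{\mathsf{Desc}}$ and $\mathsf{BiFib}_{\cal C}^{\mathsf{Act}}$ by adjoining, respectively: a generating $2$-cell $\alpha\colon f^{\ast}f_{!}X\Rightarrow X$ subject to the diagrammatic forms of (TA1) and (TA2); a mutually inverse pair $\varphi\colon f_1^{\ast}X\Rightarrow f_2^{\ast}X$, $\varphi^{-1}$, subject to (DD1) and (DD2); and a generating $2$-cell $\beta\colon (f_2)_{!}f_1^{\ast}X\Rightarrow X$ subject to (AC1) and (AC2). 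By construction a strict $2$-functor to $\mathsf{Cat}$ extending $\llbracket-\rrbracket_p$ amounts to exactly a choice of interpretation of the new generator, that is, to an object of ${\mathsf{EM}}_p(T_f)$, ${\mathsf{Desc}}_p(f)$ or ${\mathsf{Act}}_p^{\mathsf{Eq}(f)}$; morphisms of structured objects are handled the same way, after further adjoining a second generic object and a generic map between the two.

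Next I would build the translations. The kernel-pair square $P$ (with, say, $f_2$ as its ``$a$''-edge and $f_1$ as its ``$c$''-edge) is a pullback in ${\cal C}$, so $\overline{\mathsf{BC}}_P\colon f^{\ast}f_{!}\Rightarrow (f_1)_{!}f_2^{\ast}$ is available in $\mathsf{BiFib}_{\cal C}$, and interchanging $f_1,f_2$ gives $\overline{\mathsf{BC}}_{P'}\colon f^{\ast}f_{!}\Rightarrow(f_2)_{!}f_1^{\ast}$. I would then define a $2$-functor $\mathsf{BiFib}_{\cal C}^{\mathsf{Desc}}\to\mathsf{BiFib}_{\cal C}^{\mathsf{EM}}$ fixing $\mathsf{BiFib}_{\cal C}$ and $X$ and sending $\varphi$ to the small string diagram $(\varepsilon_{f_1}\cdot X)\circ((f_1)_{!}\cdot\varphi^{-1})\circ(\overline{\mathsf{BC}}_P\cdot X)$, a $2$-functor in the other direction sending $\alpha$ to the corresponding composite built from $\mathsf{BC}_P$, the units/counits $\eta_{f_i},\varepsilon_{f_i}$ and $\alpha$, and similarly for the pair $\mathsf{BiFib}_{\cal C}^{\mathsf{Act}}\rightleftarrows\mathsf{BiFib}_{\cal C}^{\mathsf{EM}}$, where the translation is essentially transport of a $T_f$-algebra structure along the Beck--Chevalley isomorphism $f^{\ast}f_{!}\cong(f_2)_{!}f_1^{\ast}$ and one must recognise, diagrammatically, that the transported monad structure coincides with $(\eta',\mu')$. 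The content of this step is to verify, for each $2$-functor, that the defining relations of the source are carried to consequences of the relations of the target, and that the two translations in each pair compose to the identity; every such verification should be a finite rewrite using $\chi$-coherence (Theorem~\ref{chi-coherence}) to absorb all pseudofunctoriality isomorphisms, the triangle identities (4.1)--(4.2), the Beck--Chevalley relations (5.1)--(5.2), and the interchange/sliding laws. In particular, that the $\varphi$ produced from an $\alpha$ is invertible is itself such a rewrite: its inverse is the datum obtained by pre- and post-composing with the $f_1\!\leftrightarrow\! f_2$ swap $\sigma$ of $B\times_A B$, and $\varphi\circ\varphi^{-1}=\mathrm{id}=\varphi^{-1}\circ\varphi$ then falls out of (TA1)--(TA2).

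Having done this, the conclusion is formal: post-composing the mutually inverse $2$-functors of the previous step with $\llbracket-\rrbracket_p$ yields mutually inverse bijections between the object-sets of the three descent categories, and, by the same device applied to morphisms, mutually inverse bijections on hom-sets compatible with composition; this produces isomorphisms of categories ${\mathsf{EM}}_p(T_f)\cong{\mathsf{Desc}}_p(f)\cong{\mathsf{Act}}_p^{\mathsf{Eq}(f)}$, which in particular are the equivalences claimed. (The Beck--Chevalley hypothesis on $p$ enters precisely in making $\llbracket-\rrbracket_p$ well defined, since $\overline{\mathsf{BC}}_P$ and the relations (5.1)--(5.2) must be interpreted.)

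The step I expect to be the real obstacle is the compatibility of the \emph{cocycle} conditions, namely showing that (DD2) corresponds, under the translation, to a consequence of (TA2), and likewise (AC2)$\leftrightarrow$(TA2). These conditions live over the double pullback $(B\times_A B)\times_B(B\times_A B)$ and involve $\pi,\pi_1,\pi_2$ together with the abbreviations $d_i,j_i,j$ of \eqref{someisos}; to turn the two sides into manifestly the same diagram one needs a preliminary \emph{pasting lemma} asserting that the forward Beck--Chevalley $2$-cell is functorial for horizontal and vertical pasting of pullback squares in ${\cal C}$ (equivalently, that $\mathsf{BC}$ of a composite pullback rectangle factors through the $\mathsf{BC}$'s of its constituent squares, modulo $\chi$-coherence), together with the mate-calculus compatibility between the transformations $k_i$ and $j_i$. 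Once that lemma is available the cocycle equivalence is a short rewrite; without it, correctly tracking which Beck--Chevalley square is invoked where is the one genuinely delicate piece of bookkeeping.
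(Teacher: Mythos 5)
Your proposal is correct and follows essentially the same route as the paper: the paper likewise realises the three kinds of descent data as generating $2$-cells of conservative string-diagrammatic extensions ${\mathsf{BiFib}}_{\cal C}(f,X,\alpha)$, ${\mathsf{BiFib}}_{\cal C}(f,X,\varphi)$ and ${\mathsf{BiFib}}_{\cal C}(f,X,\beta)$ of ${\mathsf{BiFib}}_{\cal C}$, defines the comparisons by explicit small diagrams built from $\eta_f$, $\varepsilon_{f_i}$, the pseudofunctoriality $2$-cells and the (backward) Beck--Chevalley generators, and verifies all axioms and the invertibility of the composites by finite rewrites using $\chi$-coherence (Theorem \ref{chi-coherence}), the triangle identities (4.1)--(4.2) and the relations (5.1)--(5.2). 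The only organisational differences are that the paper arranges the translations as a single cycle ${\mathsf{EM}}_p(f)\to{\mathsf{Desc}}_p(f)\to{\mathsf{Act}}_p^{\mathsf{Eq}(f)}\to{\mathsf{EM}}_p(f)$ and shows each cyclic composite is the identity (rather than using two pairs of inverse functors hubbed at ${\mathsf{EM}}_p(f)$), obtains the inverse of the induced $\varphi$ simply by replacing $\chi_{P_1}$ with $\chi_{P_1^{\it op}}$ in its definition (no swap automorphism of $B\times_A B$ is needed), and replaces your anticipated Beck--Chevalley pasting lemma by the two directly verified transfer identities {\sf ($\eta'$-trans)} and {\sf ($\mu'$-trans)} relating $(\eta',\mu')$ to $(\eta_f,\mu)$ along ${\mathsf{BC}}_{P_1}$.
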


\subsection{The proof} Before embarking on the proof, for the sake of readability,  we first establish certain string diagrammatic conventions for ${\mathsf{BiFib}}_{\cal C}$ and its extensions that we shall introduce along the way. The correspondence between the colours of the 2-dimensional regions  and the 0-cells (encoding categories) relevant for the proof is the following:
\begin{center}
\begin{tikzpicture}
\draw[draw=none,fill=EggShell!60](0,0)--(0,1)--(1,1)--(1,0)--cycle;
\node at (0.5,0.5) {\small ${1}$};
\end{tikzpicture} \quad\quad \begin{tikzpicture}
\draw[draw=none,fill=DustyRed!40](0,0)--(0,1)--(1,1)--(1,0)--cycle;
\node at (0.5,0.5) {\small ${\cal E}_B$};
\end{tikzpicture} \quad\quad \begin{tikzpicture}
\draw[draw=none,fill=SlateBlue!40](-0.25,0)--(-0.25,1)--(1.25,1)--(1.25,0)--cycle;
\node at (0.5,0.5) {\small ${\cal E}_{B\times\!_A B}$};
\end{tikzpicture} \quad\quad \begin{tikzpicture}
\draw[draw=none,fill=SageGreen!40](0,0)--(0,1)--(1,1)--(1,0)--cycle;
\node at (0.5,0.5) {\small ${\cal E}_{A}$};
\end{tikzpicture}  \quad\quad \begin{tikzpicture}
\draw[draw=none,fill=Graphite!40](-1,0)--(-1,1)--(2,1)--(2,0)--cycle;
\node at (0.5,0.5) {\small ${\cal E}_{(B\times\!_A B)\times\!_B(B\times\!_A B)}$};
\end{tikzpicture}
\end{center}
where $1$ is the symbol encoding the terminal category.
All vertices corresponding to pullback pseudofunctoriality isomorphisms   will be drawn as \raisebox{-0.05cm}{\begin{tikzpicture}
\node(c)[rectangle,draw=black,inner sep=0.5mm,fill=white,rounded corners=0.5mm]  at (0.25,0) {\footnotesize $\cong$};
\end{tikzpicture}}\,, and the ones corresponding to backward   Beck-Chevalley isomorphisms as  \raisebox{-0.05cm}{\begin{tikzpicture}\node (a) [rectangle,draw=black,inner sep=0.5mm,fill=white,rounded corners=0.5mm] at (0,0) {\scriptsize $\overline{\text{BC}}$};
\end{tikzpicture}}\,. The pseudofunctoriality isomorphisms related to the  direct image functor, as well as  forward Beck-Chevalley isomorphisms, will be drawn as \raisebox{-0.05cm}{\begin{tikzpicture}
\node(c)[rectangle,draw=black,inner sep=0.5mm,fill=DustyRose!40,rounded corners=0.5mm]  at (0.25,0) {\footnotesize $\cong$};
\end{tikzpicture}}\, and \raisebox{-0.05cm}{\begin{tikzpicture}\node (a) [rectangle,draw=black,inner sep=0.5mm,fill=DustyRose!40,rounded corners=0.5mm] at (0,0) {\scriptsize ${\text{BC}}$};
\end{tikzpicture}}\,, respectively; the green colour is meant to indicate that these 2-cells are not primitive and hence that, at this place, the diagram can be ``unfolded'' into a   larger one, consisting of primitive 2-cells. We take the green-colour convention also in more general contexts given by  the extensions of ${\mathsf{BiFib}}_{\cal C}$, where it will be used as the  colour of all non-primitive 2-cells. The types of all the 2-cells wil always be either readable  from the strings adjacent to  corresponding vertices, or traceable from other parts of the diagram. 

In a few places in the proof, we shall address Beck-Chevalley isomorphisms in the standard  form of a written formula;   the   pullback squares  $f\circ f_2=f\circ f_1$ and $f_2\circ\pi_2=f_1\circ\pi_1$  that index those isomorphisms will be denoted by $P_1$ and $P_2$, respectively.

 \medskip
The proof of Theorem \ref{equiv} proceeds by defining a triangle of functors
\begin{equation}
\raisebox{-1cm}{\begin{tikzpicture}
\node (a) at (0,0) {${\mathsf{EM}}_p(f)$};
\node (b) at (2,1) {${\mathsf{Desc}}_p(f)$};
\node (c) at (4,0) {${\mathsf{Act}}_p^{\mathsf{Eq}(f)}$};
\draw[->] (a) to(1.5,0.75); \draw[->]  (2.5,0.75) to(3.3,0.3); \draw[->]  (c)to(a);
\node(F) at (0.85,0.65) {\small $F$};\node(G) at (3.15,0.65) {\small $G$};
\node(H) at (2,-0.35) {\small $H$};
\end{tikzpicture}}
\label{tria}
\end{equation}
and showing that each cycle therein is the identity functor on the corresponding category. We do this through the string-diagrammatic environment given by three conservative  extensions,
$${\mathsf{BiFib}}_{\cal C}(f,X,\alpha), \quad {\mathsf{BiFib}}_{\cal C}(f,X,\varphi) \quad \mbox{and} \quad {\mathsf{BiFib}}_{\cal C}(f,X,\beta),$$ of ${\mathsf{BiFib}}_{\cal C}$, which allow to express and manipulate diagrammatically  descent data given by chosen objects $(X,\alpha)\in {\mathsf{EM}}_p(f)$, $(X,\varphi)\in {\mathsf{Desc}}_p(f)$ and $(X,\beta)\in {\mathsf{Act}}_p^{\mathsf{Eq}(f)}$, respectively. More precisely, and  with the above colour convention, the string-diagrammatic 2-categrory ${\mathsf{BiFib}}_{\cal C}(f,X,\alpha)$ (resp.  ${\mathsf{BiFib}}_{\cal C}(f,X,\varphi)$, ${\mathsf{BiFib}}_{\cal C}(f,X,\beta)$) is obtained by extending ${\mathsf{BiFib}}_{\cal C}$ with the  generating 2-cell 
\begin{center}
{\begin{tikzpicture}
\draw[draw=none,fill=EggShell!60](-1,1)--(1,1)--(1,-1)--(-1,-1)--cycle;
\draw[draw=none,fill=DustyRed!40] (0,-0.2) to[out=90,in=-90] (0,1) -- (-0.5,1) to [out=-90,in=120] (0,-0.2);
\draw[draw=none,fill=DustyRed!40] (0,-0.2) to[out=60,in=-90] (0.5,1) -- (1,1)--(1,-1)--(0,-1) -- (0,-0.2); 
\draw[draw=none,fill=SageGreen!40] (0,-0.2) to[out=90,in=-90] (0,1) -- (0.5,1) to [out=-90,in=60] (0,-0.2); 
\node (a) [circle,draw=black,inner sep=0.65mm,thick,fill=white] at (0,-0.2) {\small $\alpha$};
\draw[thick] (a) to[out=120,in=-90] (-0.5,1); 
\draw[thick] (a) to[out=60,in=-90] (0.5,1); 
\draw[thick] (0,-1)-- (a) to[out=90,in=-90] (0,1); 
\node at (0.5,1.2) {\small $f^{\ast}$};\node at (0,1.2) {\small $f_{!}$};
\node at (-0.5,1.2) {\small $X$};\node at (0,-1.2) {\small $X$};
\end{tikzpicture}}  \quad \raisebox{1.1cm}{\mbox{(resp.}}  {\begin{tikzpicture}
\draw[draw=none,fill=EggShell!60](-1,1)--(1,1)--(1,-1)--(-1,-1)--cycle;
\draw[draw=none,fill=DustyRed!40] (0,0) to[out=100,in=-90,looseness=1] (-0.5,1) -- (0.5,1) to [out=-90,in=80,looseness=1] (0,0);
\draw[draw=none,fill=DustyRed!40] (0,0) to[out=-100,in=90] (-0.5,-1) -- (0.5,-1) to [out=90,in=-70] (0,0); 
\draw[draw=none,fill=SlateBlue!40] (0,0) to[out=80,in=-90] (0.5,1) -- (1,1) -- (1,-1) -- (0.5,-1) to [out=90,in=-80] (0,0); 
\node (a) [circle,draw=black,inner sep=0.45mm,thick,fill=white] at (0,0) {\small $\varphi$};
\draw[thick] (a) to[out=110,in=-90] (-0.5,1); 
\draw[thick] (a) to[out=70,in=270] (0.5,1); 
\draw[thick] (a) to[out=-110,in=90] (-0.5,-1); 
\draw[thick] (a) to[out=-70,in=-270] (0.5,-1); 
\node at (0.5,1.2) {\small $f_1^{\ast}$};\node at (0.5,-1.2) {\small $f_2^{\ast}$};
\node at (-0.5,1.2) {\small $X$};\node at (-0.5,-1.2) {\small $X$};
\end{tikzpicture}}   \raisebox{1.1cm}{\mbox{ and }}  {\begin{tikzpicture}
\draw[draw=none,fill=EggShell!60](-1,1)--(1,1)--(1,-1)--(-1,-1)--cycle;
\draw[draw=none,fill=DustyRed!40] (0,-0.2) to[out=90,in=-90] (0,1) -- (-0.5,1) to [out=-90,in=120] (0,-0.2);
\draw[draw=none,fill=DustyRed!40] (0,0) to[out=60,in=-90] (0.5,1) -- (1,1)--(1,-1)--(0,-1) -- (0,-0.2); 
\draw[draw=none,fill=SlateBlue!40] (0,0) to[out=90,in=-90] (0,1) -- (0.5,1) to [out=-90,in=60] (0,-0.2); 
\node (a) [circle,draw=black,inner sep=0.35mm,thick,fill=white] at (0,-0.2) {\small $\beta$};
\draw[thick] (a) to[out=120,in=-90] (-0.5,1); 
\draw[thick] (a) to[out=60,in=-90] (0.5,1); 
\draw[thick] (0,-1)-- (a) to[out=90,in=-90] (0,1); 
\node at (0.5,1.175) {\small ${f_2}_{!}$};\node at (0,1.2) {\small $f_1^{\ast}$};
\node at (-0.5,1.2) {\small $X$};\node at (0,-1.2) {\small $X$};
\end{tikzpicture}} \raisebox{1.1cm}{\mbox{)}}
\end{center}
  subject to \hyperlink{(TA1)}{{\sf{(TA1)}}} and \hyperlink{(TA2)}{{\sf{(TA2)}}} (resp. \hyperlink{(DD1)}{{\sf{(DD1)}}} and \hyperlink{(DD2)}{{\sf{(DD2)}}},   \hyperlink{(AC1)}{{\sf{(AC1)}}} and \hyperlink{(AC2)}{{\sf{(AC2)}}}). The purpose of the  three extensions is to provide a framework for the verification of the defining properties of functors forming \eqref{tria},  and of the equality of their composite around the triangle with the identity. This is justified as follows. For each symbol $\star\in\{\alpha, \varphi, \beta\}$, the extension $\mathsf{BiFib}_{\cal C}(f,X,\star)$   enjoys the universal property that strict 2-functors $\mathsf{BiFib}_{\cal C}(f,X,\star)\rightarrow {\mathsf{Cat}}$ are in bijection with strict 2-functors $\mathsf{BiFib}_{\cal C}\rightarrow {\mathsf{Cat}}$ equipped with a choice of concrete  instance of  descent data. Consequently, any equality of string diagrams involving the added generator that is derivable in $\mathsf{BiFib}_{\cal C}(f,X,\star)$ is valid for every such choice. 
 
\medskip
 
We now proceed to define the triangle of functors \eqref{tria}.

\medskip
 
The functor $F:{\mathsf{EM}}_p(f)\rightarrow {\mathsf{Desc}}_p(f)$ is defined by $F(X,\alpha):=(X,\varphi)$, where 
 \begin{center}
{
}
\end{center}
In order to prove that  $\alpha$ is a $T_f$-algebra, we first show that the natural transformations
$\eta':{\text{Id}}_{{\cal E}_B}\Rightarrow ({f_2})_!f_1^{\ast}$ and $\mu': ({f_2})_! f_1^{\ast} ({f_2})_!f_1^{\ast}\Rightarrow  ({f_2})_! f_1^{\ast}$,   defined in \S\ref{actions},   transfer along the Beck-Chevalley isomorphism ${\mathsf{BC}}_{P_1}:(f_2)_!f_1^{\ast}\Rightarrow  f^{\ast}f_!$ (and its inverse) precisely to $\eta_f: {\text{Id}}_{{\cal E}_B}\Rightarrow f^\ast f_!$ and $\mu_f: f^{\ast}f_!f^{\ast}f_!\Rightarrow f^{\ast}f_!$, defined in \S\ref{algebra}. More precisely, in Figure \ref{tran}  and Figure \ref{tram}, we verify the following two equalities:\\[0.15cm]
\indent  \hypertarget{($\eta'$-trans)}{{\sf ($\eta'$-trans)}}    ${\mathsf{BC}}_{P_1}\circ\eta'={\eta_f}$,\\[0.15cm]
\indent \hypertarget{($\mu'$-trans)}{{\sf ($\mu'$-trans)}}  $\mu'\circ (\overline{\mathsf{BC}}_{P_1}\cdot \overline{\mathsf{BC}}_{P_1})=\overline{\mathsf{BC}}_{P_1}\circ\mu$,\\[0.15cm]
  living in the base string-diagrammatic category ${\mathsf{BiFib}}_{\cal C}$.
  \begin{figure}[H]
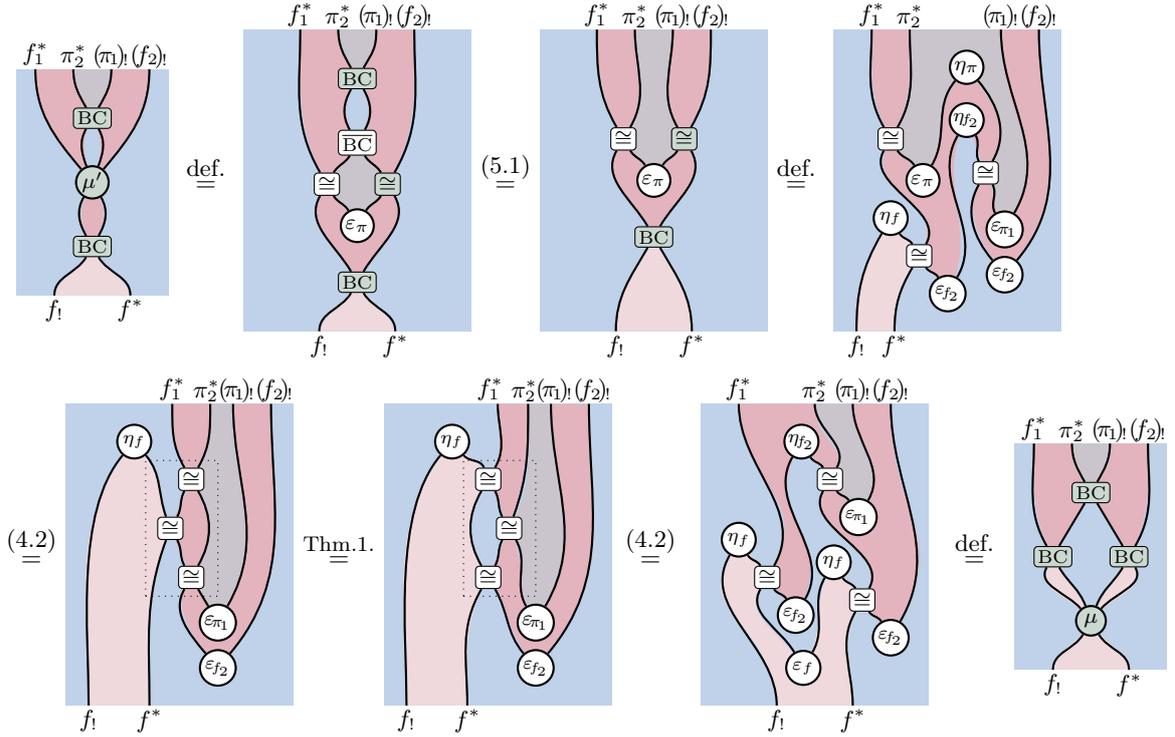

{
} 
\caption{The equality {\sf ($\mu'$-trans)} is obtained by precomposing the string diagrams in the above derivation with  $\overline{\mathsf{BC}}_{P_2}\circ (\overline{\mathsf{BC}}_{P_1}\cdot \overline{\mathsf{BC}}_{P_1})$ and by postcomposing them with $\overline{\mathsf{BC}}_{P_1}$.}
\label{tram}
\end{figure}
The equalities \hyperlink{($\eta'$-trans)}{{\sf ($\eta'$-trans)}}  and \hyperlink{($\mu'$-trans)}{{\sf ($\mu'$-trans)}}  entail that  the endofunctor $({f_2})_!f_1^{\ast}:{\cal E}_B\rightarrow {\cal E}_B$, equipped with  $\eta'$ and $\mu'$, is a monad on ${\cal E}_B$ and, thanks to {\textsf {(AC1)}} and  {\textsf {(AC2)}},  that an object  $(X,\beta)$ of ${\mathsf{Act}}_p^{\mathsf{Eq}(f)}$ is   an algebra for that  monad.  Consequently,    precomposing $\beta$  with $\overline{\mathsf{BC}}_{P_1}$ indeed yields a genuine $T_f$-algebra structure. A direct verification of the algebra laws {\textsf {(TA1)}} and {\textsf {(TA2)}}, living in  ${\mathsf{BiFib}}_{\cal C}(f,X,\beta)$, is given in Figure \ref{algver}.
\begin{figure}[H]
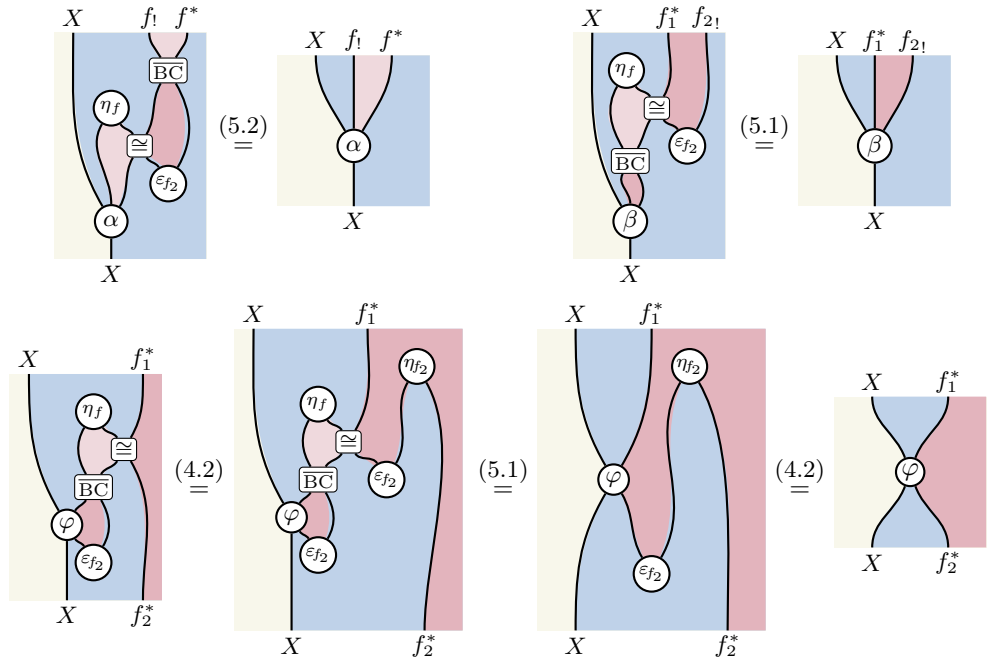

 {
} 
\caption{Derivations of equalities $H(G(F(X,\alpha)))=(X,\alpha)$ (top left), $G(F(H(X,\beta)))=(X,\beta)$ (top right), and $F(H(G(X,\varphi)))=(X,\varphi)$ (bottom).}
\label{fin}
\end{figure}
This finishes the proof of the B\' enabou-Roubaud theorem.
\bibliographystyle{unsrt}

\end{document}